%
\documentclass[12pt]{amsart}

\usepackage{lscape,amssymb, amsmath,verbatim,graphicx}
\usepackage{epsfig,subfigure,float}
\usepackage{graphicx}
\usepackage{booktabs}
\usepackage{url}
\usepackage[round]{natbib}

\newcommand{\intt}{\int\hspace{-.2cm}\int}

\topmargin-1.5cm
\textwidth15.8cm
\textheight23cm
\oddsidemargin-0.3cm%
\evensidemargin.5cm
\thispagestyle{empty}%
\parindent0cm%

\newtheorem{theorem}{Theorem}[section]
\newtheorem{lemma}{Lemma}[section]
\newtheorem{corollary}{Corollary}
\newcommand{\norm}{|\hspace{-.5mm}|}
\newcommand{\la}{\langle}
\newcommand{\ra}{\rangle}

\newcommand{\bxi}{{\boldsymbol \xi}}
\newcommand{\bpsi}{{\boldsymbol \psi}}
\newcommand{\bkappa}{{\boldsymbol \kappa}}
\newcommand{\bgamma}{{\boldsymbol \gamma}}

\newtheorem{assumption}{Assumption}[section]
\newtheorem{example}{Example}[section]
\theoremstyle{definition}

\allowdisplaybreaks
\def\beq{\begin{equation}}
\def\eeq{\end{equation}}

\numberwithin{equation}{section}
\numberwithin{theorem}{section}
\numberwithin{table}{section}
\numberwithin{remark}{section}
\numberwithin{corollary}{section}

\renewcommand{\baselinestretch}{1.1}

\begin{document}

\title[Functional data analysis with increasing number of projections]
{Functional data analysis with increasing number of projections}
\author{Stefan Fremdt}
\address{Stefan Fremdt, Mathematical Institute,
University of Cologne, Weyertal 86--90, D--50931 K\"oln, Germany
}
\author {Lajos Horv\'ath}$^1$
\address{Lajos Horv\'ath, Department of Mathematics,
University of Utah, Salt Lake City, UT 84112--0090,  USA
}

\author{Piotr Kokoszka}
\address{Piotr Kokoszka, Department of Statistics,
 Colorado State University, Ft.\ Collins, CO 80523--1877, USA}
\author{Josef G.\ Steinebach}
\address{Josef G.\ Steinebach, Mathematical Institute,
University of Cologne, Weyertal 86--90, D--50931 K\"oln, Germany
}

\subjclass{}
\keywords{Functional data, change in  mean, increasing dimension,
normal approximation, principal components.}

\thanks{Research supported by NSF grants DMS 0905400, DMS 0931948
 and DFG grant STE 306/22-1. }

\begin{abstract}
Functional principal components (FPC's)  provide the most important and
most extensively used tool for dimension reduction and inference
for functional data. The selection of the number, $d$,  of the FPC's to be used
in a specific procedure has attracted a fair amount of attention,
and a number of reasonably effective approaches exist. Intuitively,
they assume that the functional data can be sufficiently well
approximated by a projection onto a finite--dimensional subspace,
and the error resulting from such an approximation does not impact
the conclusions. This  has been shown to be a very effective approach,
but it is desirable to understand the behavior of many inferential
procedures by considering the projections on subspaces
 spanned by an increasing number
of the FPC's. Such an approach reflects more fully the infinite--dimensional
nature of functional data, and allows to derive procedures which are
fairly insensitive to the selection of $d$.
This is accomplished by considering limits as $d\to\infty$ with the
sample size.

We propose a specific framework in which we let $d\to\infty$ by
deriving a normal approximation for the partial sum process
\[
\sum_{j=1}^{\lfloor du\rfloor} \sum_{i=1} ^{\lfloor Nx\rfloor}
\xi_{i,j}, \ \ \ 0\le u\le 1, \ \ 0\le x\le 1,
\]
where $N$ is the sample size and $\xi_{i,j}$ is the score of the $i$th
function with respect to the $j$th FPC. Our approximation can be
used to derive statistics that use segments of observations and
segments of the FPC's. We apply our general results to derive
two inferential procedures for the mean function: a change--point test
and a two--sample test. In addition to the asymptotic theory, the
tests are assessed through a small simulation study and a data example.


\end{abstract}

\maketitle


\newpage

\section{\bf  Introduction}\label{s:intro}
\setcounter{equation}{0} Functional data analysis has grown into a
comprehensive and useful field of statistics which provides a
convenient framework to handle some high--dimensional data structures,
including curves and images.  The monograph of
\citet{ramsay:silverman:2005} has done a lot to introduce its ideas to
the statistics community and beyond.  Several other monographs and
thousands of papers followed.  This paper focuses on a specific aspect
of the mathematical foundations of functional data analysis, which is
however of fairly central importance.  We first describe the contribution of this
paper in broad terms, and provide some more detailed background and
discussion in the latter part of this section.

Perhaps the most important, and definitely the most commonly used,
tool for dimension reduction of functional data is the principal
component analysis. Suppose we observe
a sample of functions, $X_1, X_2, \ldots, X_N$, and denote
by
\[
\hat{\eta}_{i,j}=\int \left ( X_i(t) - \bar{X}_N(t) \right ) \hat{v}_j(t)dt,
\ \ \ \ i=1, 2, \ldots, N,
\ \  j=1,2, \ldots, d,
\]
the scores of the $X_i$ with respect to the estimated
functional principal components $\hat{v}_j$. The scores
$\hat{\eta}_{i,j}$ depend on two variables $i$ and $j$, and
to reflect the infinite--dimensional nature of the data, it may be desirable
to consider asymptotics in which both $N$ and $d$ increase.
This paper establishes results that allow us
to study the two--dimensional partial sum process
\[
\sum_{j=1}^{\lfloor du\rfloor} \sum_{i=1} ^{\lfloor Nx\rfloor}
\int \left ( X_i(t) - \mu_X(t) \right ) {v}_j(t)dt,
\ \ \ 0\le u\le 1, \ \ 0\le x\le 1.
\]
More specifically, we derive a uniform normal
approximation and apply it to two problems related to testing the null
hypothesis that all observed curves have the same mean function. We
obtain new test statistics in which the number of the functional
principal components, $d$, increases slowly with the sample size $N$.
We hope that our general approach will be used to derive similar results
in other settings.

\begin{figure}
 \centering
 \includegraphics[scale = 0.775,angle = 270]{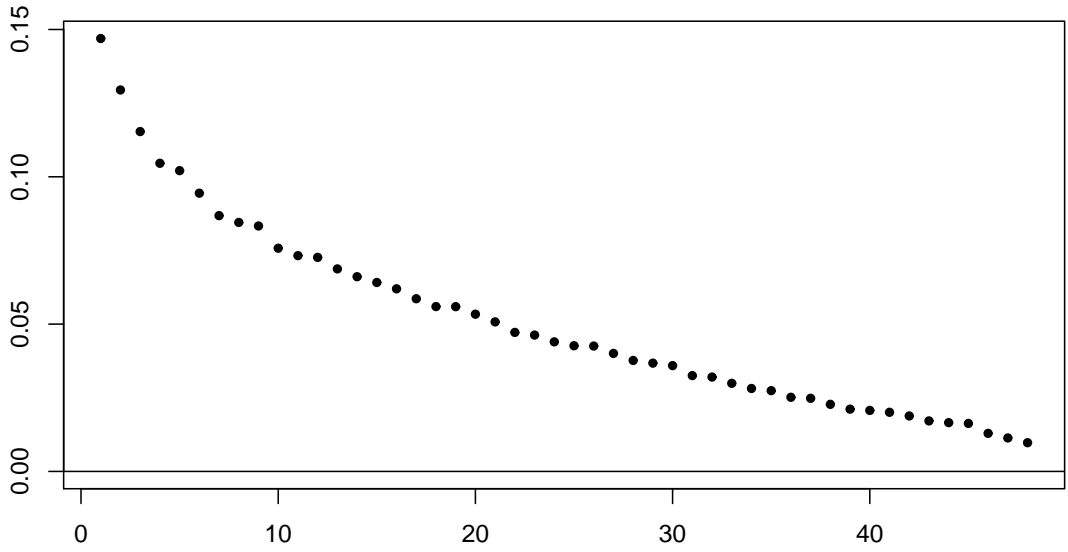}
\caption{Melbourne temperature data:
eigenvalues $\hat{\lambda}_2,\ldots,\hat{\lambda}_{49}$.}
\label{fig:3b}
\end{figure}

\begin{figure}
 \centering
 \includegraphics[scale = 0.775,angle = 270]{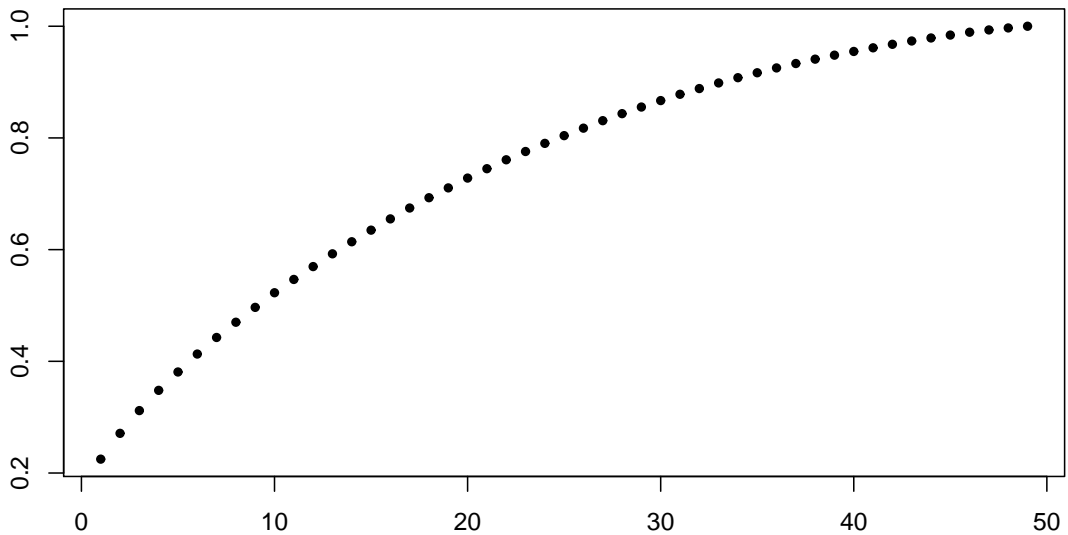}
\caption{Melbourne temperature data:
percentage of variance explained by the first $k$ eigenvalues,
 i.e.
$f_k = \sum_{i = 1}^k\hat{\lambda}_i/\sum_{j = 1}^N\hat{\lambda}_j,
k = 1,2,\ldots,49$. }
\label{fig:5}
\end{figure}

Statistical procedures for functional data which use
functional principal components (FPC's) often
depend on the number $d$ of the components used to compute various
statistics. The selection of an optimal $d$ has received a fair deal of
attention. Commonly used approaches include the cumulative variance
method, the scree plot, and several forms of cross--validation and
pseudo information criteria.  By now, most of these approaches are
implemented in several R packages and in the Matlab package PACE.  A
related direction of research has focused on the identification of the
dimension $d$ assuming that the functional data actually live in a
finite\nobreakdash--dimensional space of this dimension, see \citet{hall:vial:2006}
and \citet{bathia:yao:ziegelmann:2010}.  The research presented in
this paper is concerned with functional data which cannot be reduced
to finite--dimensional data in an obvious and easy way.
Such data are typically characterized by a slow decay of the
eigenvalues of the empirical covariance operator.
Figure~\ref{fig:3b} shows the eigenvalues of the
empirical covariance operator of the annual temperature curves
obtained over the period 1856--2011 in Melbourne, Australia,
while Figure~\ref{fig:5} shows the cumulative variance plot for the
same data set. It is seen that the eigenfunctions decay at a slow rate,
and neither their  visual inspection nor the analysis of cumulative variance
provide a clear guidance on how to select $d$. This data set is analyzed
in greater detail in Section~\ref{s:stefan}.

In situations when the choice of $d$ is difficult, two approaches seem
reasonable. In the first approach, one can apply a test using several
values of $d$ in a reasonable range. If the conclusion does not depend
on $d$, we can be confident that it is correct.  This approach has
been used in applied research, see \citet{gromenko:kokoszka:2012big}
for a recent analysis of this type.  The second approach, would be to
let $d$ increase with the sample size $N$, and derive a test statistic
based on the limit.  In a sense, the second approach is a
formalization of the first one because if a limit as $d\to \infty$
exists, then the conclusions should not depend on the choice of $d$,
if it is reasonably large.  In the FDA community there is a well
grounded intuition that $d$ should increase much slower than $N$, so
asymptotically large $d$ need not be very large in practice.  It is
also known that the rate at which $d$ increases should depend on the
manner  in  which the eigenvalues decay. We obtain specific conditions
that formalize this intuition in the framework we consider.  In more
specific settings, contributions in this directions were made by
\citet{cardot:fms:2003} and \citet{panaretos:2010}. The work of
\citet{cardot:fms:2003} is more closely related to our research: as
part of the justification of their testing procedure, they
establish conditions under which a  limiting chi--square distribution
with $d$ degrees of freedom can be approximated by a normal
distribution as $d= d(N)\to \infty$.  \citet{panaretos:2010} are
concerned with a test of the equality of the covariance operators in
two samples of Gaussian curves. In the supplemental material, they
derive asymptotics in which $d$ is allowed to increase with the sample
size.  Our theory is geared toward testing the equality of mean
functions, but we do not assume the normality of the functional
observations, so we cannot use arguments that use the equivalence of
independence and zero covariances.  We develop a new technique based
on the estimation of the Prokhorov--L{\'e}vy distance between the
underlying processes and the corresponding normal partial sums.

The paper is organized as follows. In Section~\ref{s:approx},
we set the framework and state a general normal approximation result
in Theorem~\ref{approx}.  This result is then used in Sections \ref{s:cp}
and \ref{s:two-s} to derive, respectively,  change--point and
two--sample tests based on an increasing
number of FPC's. Section~\ref{s:stefan} contains a small simulation
study and an application to the annual Melbourne
temperature curves. All proofs are collected in the appendices.

\section{\bf  Uniform normal
approximation}
\label{s:approx}

We consider  functional
observations $X_i(t),\ t\in {\mathcal I},\  i=1,2,\ldots, N,$
defined over a compact interval
${\mathcal I}$. We can and shall assume without loss of generality
that ${\mathcal I}=[0,1]$.
Throughout the paper,  we use the notation $\int =\int_0^1$ and
\[
\langle f, g \rangle = \int f(t) g(t) dt, \ \ \
\norm f\norm^2= \langle f, f \rangle.
\]
All functions we consider will be elements of the Hilbert space $L^2$
of square integrable functions on $[0,1]$.

In the testing problems that motivate this research, under the null
hypothesis, the observations follow the model
\begin{equation} \label{null-model}
X_i(t)=\mu(t)+Z_i(t),\;\;\;1\leq i \leq N,
\end{equation}
where $EZ_i(t)=0$ and  $\mu(t)$ is the common mean.
We impose the following standard assumptions.

\begin{assumption}\label{as-1} $Z_1, Z_2,\ldots, Z_N$
are independent and identically distributed.
\end{assumption}

\begin{assumption}\label{as-2} $\int\mu^2(t)dt<\infty$
and $E\norm Z_1\norm^2<\infty.$
\end{assumption}

Under these assumptions, the covariance function
\[
{\mathfrak c}(t,s)=EZ_1(t)Z_1(s),
\]
is square integrable on the unit square and therefore it
has the representation
\[
{\mathfrak c}(t,s)=\sum_{k=1}^\infty \lambda_kv_k(t)v_k(s),
\]
where $\lambda_1\geq \lambda_2\geq \ldots$ are the eigenvalues and
$v_1, v_2,\ldots$ are the orthonormal eigenfunctions of the covariance
operator, i.e. they satisfy the integral equation
\beq\label{eq-eig}
\lambda_j v_j(t)=\int {\mathfrak c}(t,s)v_j(s)ds.
\eeq

One of the most important dimension reduction techniques
of functional data analysis is to project the
observations $X_1(t), \ldots, X_N(t)$ onto the space
spanned by $v_1, \ldots ,v_d$, the eigenfunctions associated with the
$d$ largest eigenvalues. Since the covariance
function ${\mathfrak c}$, and therefore $v_1, \ldots ,v_d$,
are unknown,  we use the empirical eigenfunctions
$\hat{v}_1, \ldots ,\hat{v}_d$ and eigenvalues $\hat{\lambda}_1\geq
\hat{\lambda}_2\geq \ldots \geq \hat{\lambda}_d$ defined by
\beq\label{eq-eigemp}
\hat{\lambda}_j \hat{v}_j(t)=\int \hat{\mathfrak c}_N(t,s)\hat{v}_j(s)ds,
\eeq
where
\[
\hat{\mathfrak c}_N(t,s)=\frac{1}{N}
\sum_{i=1}^N \left ( X_i(t) - \bar{X}_N(t) \right )
\left ( X_i(s) - \bar{X}_N(s) \right )
\]
with
$
\bar{X}_N(t)=N^{-1}\sum_{i=1}^NX_i(t).
$

In this section, we require only two more assumptions, namely

\begin{assumption}\label{as-5}\;\;\;\;\quad
$\lambda_1>\lambda_2>\ldots$
\end{assumption}

\begin{assumption}\label{as-6}\;\;\;\;\quad
$E\norm Z_1\norm^3<\infty.$
\end{assumption}

Assumption~\ref{as-5} is needed to ensure that the FPC's $v_j$ are
uniquely defined. In Theorem~\ref{approx} it could, of course, be
replaced by requiring only that the first $d$ eigenvalues are positive
and different, but since in the applications we let $d\to \infty$,
we just assume that all eigenvalues are positive and distinct. If
$\lambda_{d^*+1}=0$ for some $d^*$, then the observations are in the
linear span of $v_1, \ldots ,v_{d^*}$, i.e. they are elements of a
$d^*$--dimensional space, so in this case we cannot consider
$d=d(N)\to \infty$. Assumption \ref{as-5} means that the observations
are in an infinite--dimensional space.  Assumption~\ref{as-6} is
weaker than the usual assumption $E\norm Z_1\norm^4<\infty$. As will
be seen in the proofs, subtle arguments of the probability theory in
Banach spaces are needed to dispense with the fourth moment.

To state the main result of this section, define
\begin{align*}
\bxi_i=(\xi_{i,1}, \ldots ,\xi_{i,d})^T\;\;\;\mbox{and}\;\;\;\xi_{i,j}
=\lambda_j^{-1/2}\la Z_i, v_j\ra,\;\;1\leq i \leq N,\; 1\leq j \leq d,
\end{align*}
where $\cdot^T$ denotes the transpose of vectors and matrices.
Set
\beq\label{n-sum}
S_{j,N}(x)=\frac{1}{N^{1/2}}\sum_{i=1}^{\lfloor Nx\rfloor}\xi_{i,j},
\ \ \ \ 0\le x \le 1, \ \ 1\le j \le d.
\eeq

We now provide an approximation  for the partial sum processes
$S_{j,N}(x)$ defined in (\ref{n-sum})
with suitably constructed Wiener processes (standard Brownian motions).
\begin{theorem}\label{approx}
 If Assumptions \ref{as-1}, \ref{as-5}
and \ref{as-6}  hold, then  for every $N$ we can define independent
Wiener processes $W_{1,N}, \ldots ,W_{d, N}$ such that
\begin{align}\label{main}
P\biggl\{\max_{1\leq j\leq d}\;\sup_{0\leq x \leq 1}
&\left |S_{j,N}(x)-W_{j,N}(x)\right |\geq N^{1/2-1/80} \biggl\}\\
&\leq c_*N^{-1/80}\biggl\{d^{1/12}
\biggl(\sum_{\ell=1}^d1/\lambda_\ell\biggl)^{1/8}
+\sum_{j=1}^d1/\lambda^{3/2}_j
\biggl\},\notag
\end{align}
where $c_*$ only depends on $\lambda_1$ and $E\norm Z_1\norm^3.$
\end{theorem}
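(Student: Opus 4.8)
Since $Z_1,\dots,Z_N$ are i.i.d.\ (Assumption~\ref{as-1}) and the $v_j$ are orthonormal, a direct computation from \eqref{eq-eig} gives $E[\xi_{i,j}\xi_{i,k}]=\delta_{jk}$, so the vectors $\bxi_i$ are i.i.d.\ with mean zero and covariance equal to the $d\times d$ identity. In particular the Gaussian limit of $N^{-1/2}\sum_i\bxi_i$ has \emph{independent} coordinates, so the independence of the target processes $W_{1,N},\dots,W_{d,N}$ is automatic once a joint construction is in hand; the whole task is to build them, on one probability space (for each fixed $N$), close to the $S_{j,N}$ simultaneously in $j$ and $x$. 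My plan is a strong, Hungarian--type approximation of the $d$--dimensional partial--sum process $\mathbf S_N(x)=N^{-1/2}\sum_{i=1}^{\lfloor Nx\rfloor}\bxi_i$ by a $d$--dimensional Wiener process with independent components, carried out by a dyadic construction in which each coupling step is controlled through the Prokhorov--L\'evy distance.

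First I would record the two moment quantities that drive the two terms on the right of \eqref{main}. Cauchy--Schwarz gives $|\la Z_i,v_j\ra|\le\norm Z_i\norm$, so the coordinatewise bound $E|\xi_{i,j}|^3\le\lambda_j^{-3/2}E\norm Z_1\norm^3$ yields $\sum_{j=1}^d E|\xi_{i,j}|^3\lesssim\sum_{j=1}^d\lambda_j^{-3/2}$, while the Euclidean bound $|\bxi_i|^3=(\sum_j\lambda_j^{-1}\la Z_i,v_j\ra^2)^{3/2}\le(\sum_{\ell=1}^d\lambda_\ell^{-1})^{3/2}\norm Z_i\norm^3$ gives $E|\bxi_i|^3\lesssim(\sum_\ell\lambda_\ell^{-1})^{3/2}E\norm Z_1\norm^3$. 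These are precisely the quantities producing $\sum_j\lambda_j^{-3/2}$ and, after a $1/12$ power is taken in the multivariate step, $d^{1/12}(\sum_\ell\lambda_\ell^{-1})^{1/8}$ (the constant $E\norm Z_1\norm^3$ being absorbed into $c_*$). Because only a third moment is assumed (Assumption~\ref{as-6}), I expect a preliminary truncation of $\bxi_i$ at a level taken to be a power of $N$, with the discarded tail estimated by these third moments; the truncated summands then carry enough higher moments for the coupling estimates while the truncation error stays negligible.

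The core is the dyadic coupling. At the top level I would couple $\sum_{i=1}^N\bxi_i$ to an $\mathcal N(0,N I_d)$ vector, and then recursively, conditionally on the sum over each dyadic block, couple the way that sum splits between its two sub--blocks to the corresponding conditional Gaussian split. Each step is a finite--dimensional normal approximation whose Prokhorov--L\'evy distance to the Gaussian I would bound by a multivariate Berry--Esseen estimate, and then realize on a common space via Strassen's/Dudley's theorem so that the distance is controlled almost surely. The dimension $d$ and the two third--moment quantities enter exactly here, yielding the two summands of \eqref{main}; the identity covariance keeps every Gaussian block coordinatewise independent throughout, and since an $\ell^2$ discrepancy dominates $\max_j|S_{j,N}-W_{j,N}|$, the construction delivers the $\max_j\sup_x$ metric of the statement. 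Summing the per--level errors over the $O(\log N)$ dyadic levels, passing from integer and dyadic times to all $0\le x\le1$ by maximal inequalities (L\'evy--Ottaviani for the random walk, the modulus of continuity for the Wiener process), and rescaling by $N^{-1/2}$ then produces the threshold $N^{1/2-1/80}$ and the factor $N^{-1/80}$.

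The main obstacle is obtaining the multivariate normal approximation with the \emph{explicit and mild} dependence on $d$ displayed in \eqref{main} while assuming only $E\norm Z_1\norm^3<\infty$. With a fourth moment one could invoke standard multivariate Hungarian constructions, but dispensing with it forces the delicate truncation together with the Banach--space probability arguments alluded to after Assumption~\ref{as-6}; the small exponent $1/80$ is the residue of simultaneously balancing the truncation level, the Prokhorov radius at each dyadic level, and the summation over levels so that the dimension enters only as $d^{1/12}$ and the eigenvalue sum only to the power $1/8$.
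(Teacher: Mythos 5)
Your moment computations and the identification of the two driving quantities, $\sum_{j\le d}\lambda_j^{-3/2}$ and $(\sum_{\ell\le d}1/\lambda_\ell)^{3/2}$, match the paper's Lemma~\ref{l-2.1}, and your instinct to combine a multivariate Berry--Esseen bound with Strassen's theorem is exactly the paper's starting point (its Lemma~\ref{sena}, via Senatov's Theorem 6.4.1 plus Strassen). But the dyadic, KMT-style recursion you build on top of it has a genuine quantitative gap. With only third moments (Assumption~\ref{as-6}), the Prokhorov radius available for a block of length $m$ is polynomial, of order $m^{-1/8}d^{1/4}(\sum_{\ell\le d}1/\lambda_\ell)^{3/8}$. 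In a dyadic scheme, level $k$ contains $2^k$ blocks of length $N2^{-k}$, so union-bounding the per-coupling failure probabilities gives a level-$k$ contribution of order $2^k(N2^{-k})^{-1/8}=2^{9k/8}N^{-1/8}$, which \emph{diverges} as $k$ grows: the deep levels destroy the estimate, and your claim that one may simply sum the errors over the $O(\log N)$ levels does not go through. Moreover, every level below the top requires a normal approximation for \emph{conditional} laws (a half-block sum given the block total), and explicit multivariate Berry--Esseen bounds with controlled $d$-dependence for such conditional distributions under only $E\norm Z_1\norm^3<\infty$ are not off the shelf; your proposal does not supply them. To salvage the plan you would have to stop the recursion at a coarse scale and control the remaining oscillation by moment maximal inequalities --- at which point the construction degenerates into a single-level blocking.

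That single-level blocking is precisely what the paper does, with no recursion and no truncation of the $\xi_{i,j}$ themselves. The $N$ summands are split into $K=\lfloor N^{1/10}\rfloor$ blocks of length $M=\lfloor N/K\rfloor$, and the Senatov--Strassen coupling is applied once, independently in each block (no conditioning is needed since the blocks are independent). The key step your plan lacks is the paper's Lemma~\ref{sena-2}: the Prokhorov-type coupling bound is upgraded to a per-coordinate \emph{variance} bound, $E(\sum_{i\le n}(\xi_{i,j}-\gamma_{i,j}))^2\le c\,n^{23/24}\lambda_j^{-1}(d^{1/4}(\sum_{\ell\le d}1/\lambda_\ell)^{3/8})^{1/3}$, obtained by H\"older's inequality on the exceptional event together with Rosenthal's inequality applied to the untruncated sums. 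This $L^2$ bound feeds Kolmogorov's maximal inequality across the $K$ block endpoints, while within-block oscillation is handled by Rosenthal plus a Marcinkiewicz--Zygmund maximal estimate for the $\xi$'s and by the Cs\"org\H{o}--R\'ev\'esz increment lemma for the Wiener processes. The exponents in \eqref{main} then fall out of the choices $\delta=1/80$ and $K=\lfloor N^{1/10}\rfloor$, with $d^{1/12}(\sum_{\ell\le d}1/\lambda_\ell)^{1/8}$ arising as the $1/3$ H\"older power of $d^{1/4}(\sum_{\ell\le d}1/\lambda_\ell)^{3/8}$ --- not from a ``$1/12$ power in the multivariate step'' as you conjectured.
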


The constant $1/80$ in \eqref{main} is not crucial, it is a result
of our calculations.
Theorem~\ref{approx} is related to the results of
Einmahl (\citeyear{einmahl:1987}, \citeyear{einmahl:1989}) who
obtained strong approximations for partial sums of independent and
identically distributed random vectors with zero mean and
with identity covariance matrix. In our setting, for any fixed $d$,
the covariance matrix is not the identity, but this is not the central  difficulty.
The main value of Theorem~\ref{approx} stems from the fact that
it shows how the rate of the approximation depends on $d$;
no such information is contained in the work of
Einmahl (\citeyear{einmahl:1987}, \citeyear{einmahl:1989}),
who did not need to consider the dependence on $d$. The explicit
dependence of the right hand side of \eqref{main} on $d$ is crucial
in the applications presented in the following sections in which the dimension
of the projection space depends on the sample size $N$.

Very broadly speaking,
Theorem~\ref{approx} implies  that in all reasonable
statistics  based on averaging the scores,
even in those based on an increasing number
of FPC's, the partial sums of scores
can be replaced by Wiener processes to obtain a limit distribution.
The right hand side of \eqref{main} allows us to derive assumptions
on the eigenvalues required to obtain a specific result. Replacing
the unobservable scores $\xi_{i,j}$ by the sample scores
$\hat{\eta}_{i,j}$ is relatively easy. We will illustrate these ideas
in Sections \ref{s:cp} and \ref{s:two-s}.

\section{\bf  Change--point detection}
\label{s:cp}
Over the past four decades,
the investigation of the asymptotic properties of partial sum
processes has to a large extent been motivated by
change--point detection procedures, and this is the most natural
application of Theorem~\ref{approx}.  The research on the change--point
problem in various contexts is very extensive, some aspects
of the asymptotic theory are presented in \citet{csorgo:horvath:1997}.
Detection of a change in the mean function was studied
by \citet{berkes:gabrys:horvath:kokoszka:2009} who considered
a procedure in which the number of the FPC's, $d$,  was fixed, and the asymptotic distribution of the test statistic depended on $d$.
We show in this section that it is possible to derive tests with
a standard normal limiting distribution by allowing the $d$ to depend
on the sample size $N$.

We want to test whether the mean of the
observations remained the same during the observation period, i.e. we
test the null hypothesis
\[
H_0:\;\;EX_1(\cdot)=EX_2(\cdot)=\cdots =EX_N(\cdot)
\]
(``='' means equality in $L^2$). Under the null hypothesis, the $X_i$
follow model (\ref{null-model}) in which
$\mu(\cdot)$ is an unknown common mean function under $H_0$.
The alternative hypothesis is
\begin{align*}
H_A:\;&\mbox{there is}\;\;k^*\in [1,2,\ldots, N)\;\mbox{ such that}\\
&EX_1(\cdot)=\cdots=EX_{k^*}(\cdot)\neq EX_{k^*+1}(\cdot)=\cdots
=EX_N(\cdot).
\end{align*}
Under $H_A$ the mean changes at an  unknown time $k^*$.

To derive a new class of tests, we introduce the process
\[
\hat{Z}_N(u,x)
=\frac{1}{d^{1/2}}\sum_{j=1}^{\lfloor du\rfloor}
\left\{ \frac{1}{N}
\left [ \hat{S}_{j}( \lfloor Nx\rfloor ) -x\hat{S}_j(N)\right ]^2
-x(1-x)\right\},\;\;0\leq u, x \leq 1,
\]
where
\[
\hat{S}_j(k)=\frac{1}{\hat{\lambda}_j^{1/2}}
\sum_{i=1}^k\hat{\eta}_{i,j}.
\]
The process $\hat{Z}_N(u,x)$ contains the cumulative sums
$\hat{S}_{j}( \lfloor Nx\rfloor ) -x\hat{S}_j(N)$ which measure the
deviation of the partial sums from their ``trend'' under $H_0$,
and a correction term $x(1-x)$ needed to ensure convergence
as $d\to \infty$.

To obtain a limit which does not depend on any unknown
quantities, we need to impose assumptions on the rate at
which $d=d(N)$ increases with $N$. Intuitively, the
assumptions below state that  $d$ is much smaller than
the sample size $N$,  the $d$ largest eigenvalues are not too small,
and that the difference between the
consecutive eigenvalues tends to zero slowly.  Very broadly speaking,
these assumptions mean that the distribution of the observations
must sufficiently fill the whole infinite--dimensional space $L^2$.

\begin{assumption}\label{d-0}\quad $d=d(N)\to \infty$ \end{assumption}
\begin{assumption}\label{d-1}\quad
$ (d\log N)^{1/2}N^{-1/80}\to 0,
$
\end{assumption}
\begin{assumption}\label{d-2}\quad
$ d^{1/12}N^{-1/80}\biggl(\displaystyle
\sum_{j=1}^d\displaystyle 1/\lambda_j\biggl)^{1/8}\to 0.
$
\end{assumption}
\begin{assumption}\label{d-3}\quad
$
 N^{-1/80}\displaystyle\sum_{j=1}^d
\displaystyle 1/\lambda_j^{3/2}\to 0.
$
\end{assumption}
\begin{assumption}\label{d-4}
$$
\frac{1}{d^{1/2}N^{1/3}}\sum_{j=1}^d\frac{1}{\lambda_j\zeta_j}\to 0,
$$
where $\zeta_1=\lambda_2-\lambda_1$,
$\zeta_j=\min(\lambda_{j-1}-\lambda_j, \lambda_j-\lambda_{j+1}),\ j\geq 2$.
\end{assumption}

With these preparations, we can state the main result of this section.

\begin{theorem}\label{th-1} If Assumptions \ref{as-1}--\ref{as-5}
and \ref{d-0}--\ref{d-4} are satisfied, then
\[
\hat{Z}_N(u,x)\;\to\;\;\Gamma(u,x)\;\;\mbox{in}\;\;{\mathcal D}[0,1]^2,
\]
where  $\Gamma(u,x)$ is a mean zero Gaussian process with
\[
E[\Gamma(u,x)\Gamma(v,y)]=2ux^2(1-y)^2, \ \ \  0\leq u\leq v \leq 1, \ \
0\leq x\leq y  \leq 1.
\]
\end{theorem}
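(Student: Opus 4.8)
The plan is to peel $\hat Z_N$ down to a $d^{1/2}$--normalized sum of independent squared Brownian bridges in three reduction steps, and then run a central limit theorem in the component index $j$. For Step~1 (empirical to population eigenelements), note that under $H_0$ we have $X_i-\bar X_N=Z_i-\bar Z_N$, so $\hat\eta_{i,j}=\la Z_i-\bar Z_N,\hat v_j\ra$ and the bridged sum $\hat S_j(\lfloor Nx\rfloor)-x\hat S_j(N)$ never sees the common mean $\mu$. I would replace $\hat\lambda_j$ and $\hat v_j$ by the population $\lambda_j$ and $v_j$ (up to the sign $\hat c_j=\operatorname{sign}\la\hat v_j,v_j\ra$), so that $\hat S_j(\lfloor Nx\rfloor)$ becomes $\lambda_j^{-1/2}\sum_{i\le\lfloor Nx\rfloor}\la Z_i,v_j\ra=N^{1/2}S_{j,N}(x)$. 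The perturbations $|\hat\lambda_j-\lambda_j|\le\norm\hat{\mathfrak c}_N-\mathfrak c\norm$ and $\norm\hat v_j-\hat c_jv_j\norm\le C\norm\hat{\mathfrak c}_N-\mathfrak c\norm/\zeta_j$ are standard; since the bracket is squared, divided by $N$, summed over $j\le d$ and divided by $d^{1/2}$, the accumulated error is governed by $\frac{1}{d^{1/2}}\sum_{j=1}^d\frac{1}{\lambda_j\zeta_j}\norm\hat{\mathfrak c}_N-\mathfrak c\norm$. Under only the third moment of Assumption~\ref{as-6}, a von Bahr--Esseen bound for the Hilbert--Schmidt--valued average gives $E\norm\hat{\mathfrak c}_N-\mathfrak c\norm^{3/2}=O(N^{-1/2})$, hence $\norm\hat{\mathfrak c}_N-\mathfrak c\norm=O_P(N^{-1/3})$; this is exactly why the normalization $d^{1/2}N^{1/3}$ and the eigengap sum appear, and Assumption~\ref{d-4} forces the error to $0$. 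After Step~1, $\hat Z_N=\tilde Z_N+o_P(1)$, where $\tilde Z_N$ is the same functional of $S_{j,N}(x)-xS_{j,N}(1)$.

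For Step~2 (population scores to Wiener bridges), I invoke Theorem~\ref{approx}: Assumptions~\ref{d-2} and \ref{d-3} make the right-hand side of \eqref{main} tend to $0$, so with probability tending to one the partial sums are uniformly close to the independent Wiener processes $W_{j,N}$, and $S_{j,N}(x)-xS_{j,N}(1)$ is matched by the Brownian bridge $B_{j,N}(x):=W_{j,N}(x)-xW_{j,N}(1)$. Using $a^2-b^2=(a-b)(a+b)$, where the difference is uniformly small on the good event of \eqref{main} and the sum is of the ambient scale, the contribution of this coupling error to $\hat Z_N$, after summing over $j\le d$ and dividing by $d^{1/2}$, is of order $(d\log N)^{1/2}N^{-1/80}$ (the $\log N$ from the max over $j\le d$ and $\sup_x$), which vanishes by Assumption~\ref{d-1}. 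This leaves
\[
\hat Z_N(u,x)=\frac{1}{d^{1/2}}\sum_{j=1}^{\lfloor du\rfloor}\bigl\{B_{j,N}(x)^2-x(1-x)\bigr\}+o_P(1),
\]
the centering being exact since $E\,B_{j,N}(x)^2=x(1-x)$.

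For Step~3 (CLT in the dimension index), write $Y_j(x)=B_j(x)^2-x(1-x)$ for i.i.d.\ Brownian bridges $B_j$; the leading term is a partial-sum process of the i.i.d.\ mean-zero elements $Y_j$ normalized by $d^{1/2}$, so this is a functional CLT as $d\to\infty$ (Assumption~\ref{d-0}). For the finite-dimensional distributions I apply the Lindeberg--Feller theorem to the array $d^{-1/2}Y_j$. The covariance uses the Gaussian identity $\operatorname{Cov}(U^2,V^2)=2[\operatorname{Cov}(U,V)]^2$ together with $\operatorname{Cov}(B_j(x),B_j(y))=x(1-y)$ for $x\le y$, giving $E[Y_j(x)Y_j(y)]=2x^2(1-y)^2$ and hence
\[
E\bigl[\tilde Z_N(u,x)\,\tilde Z_N(v,y)\bigr]=\frac{\min(\lfloor du\rfloor,\lfloor dv\rfloor)}{d}\,2x^2(1-y)^2\longrightarrow 2ux^2(1-y)^2
\]
for $0\le u\le v\le1$, $0\le x\le y\le1$, which is the stated covariance of $\Gamma$. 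Tightness in $\mathcal D[0,1]^2$ I would obtain from moment bounds on rectangular increments of the partial sums of $Y_j$: monotonicity in $u$ reduces that direction to ordinary partial-sum tightness, while in $x$ the finiteness of all moments of $B_j(x)^2$ yields the needed equicontinuity.

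The hard part will be Step~1. Squaring the bridged sums turns the $O_P$ approximation error into a cross term that must be bounded uniformly over an increasing number $d$ of coordinates, each weighted by an eigengap factor $1/\zeta_j$ and an inverse eigenvalue $1/\lambda_j$; obtaining the dependence on $d$, $\lambda_j$ and $\zeta_j$ sharply enough to vanish --- while controlling $\norm\hat{\mathfrak c}_N-\mathfrak c\norm$ under only a third moment --- is the delicate computation that both the unusual $d^{1/2}N^{1/3}$ scaling and Assumption~\ref{d-4} are designed to accommodate.
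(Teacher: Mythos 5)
Your three-step reduction is essentially the paper's own proof: Step 1 is the paper's transition from $\hat{Z}_N$ to ${Z}_N$ via the perturbation bounds of Lemma \ref{dunsch}, Kuelbs' bound $\sup_x\norm U_N(x)\norm^2=O_P(1)$, and $\norm \hat{\mathfrak c}_N-{\mathfrak c}\norm=O_P(N^{-1/3})$ (which the paper obtains from a Marcinkiewicz--Zygmund law of large numbers in Banach space, equivalent to your von Bahr--Esseen route), with Assumption \ref{d-4} absorbing the eigengap-weighted error exactly as you describe, and Step 2 is precisely Lemma \ref{bridge-approx}, with the same $(d\log N)^{1/2}N^{-1/80}$ coupling error killed by Assumption \ref{d-1}. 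Your Step 3 corresponds to Lemma \ref{hahn}, where the paper invokes Hahn's (1978) CLT in ${\mathcal D}[0,1]$ verified through Garsia-type increment moment estimates for $V(t)=B^2(t)-t(1-t)$ --- the same covariance identity and the increment bounds that your Lindeberg--Feller-plus-rectangular-moments sketch would need to make the tightness claim precise --- so the two arguments coincide in substance.
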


One can  verify by computing the covariance functions that
\begin{equation} \label{e:rep}
\{\Gamma(u,x),\; 0\leq u,x\leq 1\}
\stackrel{{\mathcal D}}{=}\{\sqrt{2}(1-x)^2W(u,x^2/(1-x)^2),
\; 0\leq u,x\leq 1\},
\end{equation}
where $\{W(v,y), v,y\geq 0\}$ is a bivariate  Wiener process,
i.e. $W(v,y)$ is a Gaussian process with
$EW(v,y)=0$ and $E[W(v,y)W(v',y')]=\min(v,v')\min(y,y')$.
Representation \eqref{e:rep} means that continuous functionals
of the process $\Gamma(\cdot, \cdot)$ can be simulated
with arbitrary precision, so  Monte Carlo tests can be used.
One would choose the number of projections in the CUSUM 
procedure such that the test would give
the largest rejection if the alternative holds.
 The statistic $\max_u\max_x|\hat{Z}_N(u,x)|$ is maximizing the CUSUM
statistics $\max_x|\hat{Z}_N(k/d,x)|$, where $k=1,2,\ldots ,d$ projections are used.
It is however possible to obtain a number of simple asymptotic tests
by examining closer the structure of the process  $\Gamma(\cdot, \cdot)$.
We list some of them in Corollary~\ref{col}, and we will see
in Section~\ref{s:stefan} that the Cram{\'e}r-von-Mises type tests
have very good finite sample properties.  Let $B$ denote a
Brownian bridge and define
\[
\mu_0=E\left(\sup_{0\leq x \leq 1}B^2(x)\right) \ \ \ {\rm and} \ \ \
\sigma_0^2=\mbox{var}\left (\sup_{0\leq x \leq 1}B^2(x)\right ).
\]

\begin{corollary}\label{col}
If the assumptions of Theorem \ref{th-1} are satisfied, then
\begin{equation} \label{col-1}
\frac{1}{d^{1/2}\sigma_0}\left\{\sum_{j=1}^{d}
\sup_{0\leq x\leq 1}\frac{1}{N}\left (\hat{S}_{j}( \lfloor Nx\rfloor )
-x\hat{S}_j(N)\right)^2
-d\mu_0\right\}\;\;\stackrel{{\mathcal D}}{\to}\;\;N(0,1),
\end{equation}
\begin{equation} \label{col-2}
\frac{1}{(d/45)^{1/2}}\left\{\sum_{j=1}^{d}
\frac{1}{N}\int (\hat{S}_{j}( \lfloor Nx\rfloor )-x\hat{S}_j(N))^2dx
-\frac{d}{6}\right\}\;\;\stackrel{{\mathcal D}}{\to}\;\;N(0,1),
\end{equation}
\begin{equation} \label{col-3}
\frac{1}{(d/8)^{1/2}}\left\{\sup_{0\leq x \leq 1}\sum_{j=1}^{d}
\frac{1}{N} (\hat{S}_{j}( \lfloor Nx\rfloor )-x\hat{S}_j(N))^2
-\frac{d}{4}\right\}\;\;\stackrel{{\mathcal D}}{\to}\;\;N(0,1),
\end{equation}
where $N(0,1)$ stands for a standard normal random variable.
\end{corollary}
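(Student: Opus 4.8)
The plan is to read all three limits off the structural approximation behind Theorem~\ref{th-1}. After fixing the (irrelevant) sign of $\hat v_j$ and using that $\hat S_j(N)=\hat\lambda_j^{-1/2}\langle\sum_i(X_i-\bar X_N),\hat v_j\rangle=0$, the building blocks
\[
T_{j,N}(x):=\frac1N\bigl[\hat S_j(\lfloor Nx\rfloor)-x\hat S_j(N)\bigr]^2=\Bigl(N^{-1/2}\hat S_j(\lfloor Nx\rfloor)\Bigr)^2
\]
are squared, normalised, bridge--type CUSUMs of the scores. Theorem~\ref{approx}, together with the standard perturbation bounds on $|\hat\lambda_j-\lambda_j|$ and $\|\hat v_j-v_j\|$ in terms of the gaps $\zeta_j$, allows me to replace $N^{-1/2}\hat S_j(\lfloor Nx\rfloor)$ by $B_{j,N}(x):=W_{j,N}(x)-xW_{j,N}(1)$, where $W_{1,N},\dots,W_{d,N}$ are the independent Wiener processes of Theorem~\ref{approx}, so that $B_{1,N},\dots,B_{d,N}$ are independent standard Brownian bridges. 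Assumptions~\ref{d-1}--\ref{d-4} are calibrated precisely so that $d^{-1/2}\sum_{j=1}^d\sup_x|T_{j,N}(x)-B_{j,N}^2(x)|\stackrel{P}{\to}0$. I will also use the elementary moment facts $E\int_0^1B^2=1/6$, $\mathrm{var}\int_0^1B^2=1/45$, $E\sup_xB^2=\mu_0$, $\mathrm{var}\sup_xB^2=\sigma_0^2$, and $\mathrm{var}\,\Gamma(1,1/2)=2\cdot(1/2)^2(1/2)^2=1/8$.

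I would treat \eqref{col-1} first, stressing that it is \emph{not} a continuous functional of $\hat Z_N$: the latter sums the $T_{j,N}(x)$ at a common $x$ before any supremum is taken, whereas \eqref{col-1} takes the supremum for each $j$ separately. So Theorem~\ref{th-1} cannot be invoked directly; instead I view $\sum_{j=1}^d\sup_xT_{j,N}(x)$ as a sum of independent terms. Since the $B_{j,N}$ are i.i.d.\ standard bridges, $\{\sup_xB_{j,N}^2(x)\}_{j=1}^d$ are i.i.d.\ with mean $\mu_0$ and finite variance $\sigma_0^2$, so the classical CLT gives $(d^{1/2}\sigma_0)^{-1}(\sum_j\sup_xB_{j,N}^2-d\mu_0)\stackrel{\mathcal D}{\to}N(0,1)$; the replacement error is negligible because $|\sup_xT_{j,N}-\sup_xB_{j,N}^2|\le\sup_x|T_{j,N}-B_{j,N}^2|$ and the sum of these, divided by $d^{1/2}$, tends to $0$ in probability by the first paragraph. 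Statement \eqref{col-2} follows in the same way, with summands $\int_0^1B_{j,N}^2$ of mean $1/6$ and variance $1/45$; alternatively the left--hand side of \eqref{col-2} equals $45^{1/2}\int_0^1\hat Z_N(1,x)\,dx$, which by Theorem~\ref{th-1} and the continuity of the integral functional converges to $45^{1/2}\int_0^1\Gamma(1,x)\,dx$, a centred Gaussian of variance $1/45$.

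The main obstacle is \eqref{col-3}. Here $\sum_{j=1}^dT_{j,N}(x)=d\,x(1-x)+d^{1/2}\hat Z_N(1,x)$, whose deterministic part grows like $d$, so $\sup_x\sum_jT_{j,N}(x)$ is not a bounded continuous functional of $\hat Z_N$. Writing $x(1-x)=\tfrac14-(x-\tfrac12)^2$ reduces the claim to
\[
\frac{1}{d^{1/2}}\Bigl(\sup_x\sum_{j=1}^dT_{j,N}(x)-\frac d4\Bigr)=\sup_x\Bigl\{\hat Z_N(1,x)-d^{1/2}(x-\tfrac12)^2\Bigr\}\;\stackrel{\mathcal D}{\to}\;\Gamma(1,\tfrac12),
\]
after which $\mathrm{var}\,\Gamma(1,1/2)=1/8$ produces the normalisation $(d/8)^{1/2}$ and the centering $d/4$. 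The supremum localises at $x=\tfrac12$: a lower bound is the value $\hat Z_N(1,\tfrac12)\stackrel{\mathcal D}{\to}\Gamma(1,\tfrac12)$ at $x=\tfrac12$, while for the upper bound I split the supremum over $|x-\tfrac12|\le\delta$ and $|x-\tfrac12|>\delta$. On the first region the quadratic penalty is discarded and $\sup_{|x-1/2|\le\delta}\hat Z_N(1,x)\stackrel{\mathcal D}{\to}\sup_{|x-1/2|\le\delta}\Gamma(1,x)$ by continuous mapping; on the second region $\hat Z_N(1,x)-d^{1/2}\delta^2\stackrel{P}{\to}-\infty$ because $\|\hat Z_N(1,\cdot)\|_\infty=O_P(1)$ by the tightness in Theorem~\ref{th-1}. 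Letting $N\to\infty$ and then $\delta\to0$, and using the sample--path continuity of $\Gamma(1,\cdot)$ at $\tfrac12$, squeezes the supremum to $\Gamma(1,\tfrac12)$.

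The steps I expect to fight with are, first, making the iterated limit ($N\to\infty$ then $\delta\to0$) in \eqref{col-3} rigorous --- this is a standard $\varepsilon$--argument but requires the tightness of $\hat Z_N(1,\cdot)$ and the almost--sure continuity of $\Gamma(1,\cdot)$ to be quoted carefully --- and, second, verifying the uniform--in--$j$ approximation $d^{-1/2}\sum_j\sup_x|T_{j,N}-B_{j,N}^2|\stackrel{P}{\to}0$ needed for \eqref{col-1}, whose control of the empirical eigenelements summed over $j$ is exactly where Assumptions~\ref{d-2}--\ref{d-4} enter.
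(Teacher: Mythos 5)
Your proposal is correct. For \eqref{col-1} and \eqref{col-2} it is essentially the paper's own proof: there, too, one reduces via Lemma~\ref{bridge-approx} and the uniform-in-$x$ eigenelement replacements culminating in \eqref{final} to the corresponding statements for $d$ independent Brownian bridges, and then invokes the classical CLT for the i.i.d.\ variables $\sup_{0\le x\le 1}B_j^2(x)$ (mean $\mu_0$, variance $\sigma_0^2$), respectively $\int B_j^2$ (mean $1/6$, variance $1/45$, a known result). Your observation that \eqref{col-1} is \emph{not} a continuous functional of $\hat Z_N$ and therefore needs the per-$j$ bound $d^{-1/2}\sum_{j=1}^{d}\sup_x|T_{j,N}(x)-B_{j,N}^2(x)|\stackrel{P}{\to}0$ is exactly how the paper uses Lemma~\ref{bridge-approx}: the lemma's statement records only $\sup_x\sum_j$, but its proof (uniform-in-$x$, max-over-$j$ bounds from Theorem~\ref{approx} plus the bridge-supremum tail bound) in fact delivers the stronger $\sum_j\sup_x$ form that both you and the paper rely on. The genuine difference is \eqref{col-3}: the paper disposes of it in one sentence, remarking that ``the same argument'' applies and deferring the bridge-level limit theorem for $\sup_x\sum_{j\le d}B_j^2(x)$ to Remark 2.1 of \citet{aue:hormann:horvath:reimherr:2009}, whereas you prove it from scratch by the quadratic-penalty localization: writing $x(1-x)=\tfrac14-(x-\tfrac12)^2$ reduces the claim to $\sup_x\{\hat Z_N(1,x)-d^{1/2}(x-\tfrac12)^2\}\stackrel{\mathcal D}{\to}\Gamma(1,\tfrac12)$, which you establish by the lower bound at $x=\tfrac12$ and the $\delta$-splitting upper bound, using $\sup_x|\hat Z_N(1,x)|=O_P(1)$ (from Theorem~\ref{th-1}) and the a.s.\ continuity of $\Gamma(1,\cdot)$; your constants check out, since $E\Gamma^2(1,\tfrac12)=2\bigl(\tfrac12\bigr)^2\bigl(\tfrac12\bigr)^2=\tfrac18$ accounts for both the centering $d/4$ and the norming $(d/8)^{1/2}$. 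The paper's route buys brevity; yours buys a self-contained argument that makes visible why the supremum in \eqref{col-3} localizes at $x=\tfrac12$ and where the constants come from, a point the published proof leaves entirely to the cited reference.
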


We conclude this section with two examples which show
that Assumptions \ref{d-1}--\ref{d-4} hold under both power law
and exponential decay of the eigenvalues.

\begin{example}\label{ex-1} If the eigenvalues satisfy
$$
\lambda_j=\frac{c_1}{(j-c_2)^\alpha}+o\left(\frac{1}{j^{\alpha+1}}\right),
\;\;\;\mbox{as}\;\;\;j\to \infty,
$$
with some $c_1>0, \;0\leq c_2<1$ and $\alpha>0$, then Assumptions
\ref{d-1}--\ref{d-4} hold if $d/(\log N)^\beta\to 0$ with some $\beta>0$.
\end{example}

Under the conditions of Example 3.1, one could choose $d_N=O(N^\zeta)$, 
where $\zeta$
depends on $\alpha$. In case of a fixed samplle size $N$,
the  power of the test would decrease if $d_N$ is too large.
 Hence we recommend choosing $d_N\approx (\log N)^\beta$, 
 where $\beta>0$ can be arbitrarily chosen.

\begin{example}\label{ex-2} If the eigenvalues satisfy
$$
\lambda_j=c_0e^{-\alpha j}+o(e^{-\alpha j}),\;\;\;\mbox{as}\;\;\;j\to \infty,
$$
with some $c_0>0$ and $\alpha>0$, then Assumptions \ref{d-1}--\ref{d-4}
hold if $d/(\log\log N)^\beta\to 0$ with some $\beta>0$.
\end{example}

\section{\bf Two--sample problem}
 \label{s:two-s}
The two--sample problem for functional data was perhaps first
discussed in depth by \citet{benko:hardle:kneip:2009} who
were motivated by a problem related to implied volatility curves.
It has recently attracted a fair amount of attention motivated
by problems arising in space physics, see
\citet{horvath:kokoszka:reimherr:2009},
genetics, see \citet{panaretos:2010}, and finance, see
\citet{horvath:kokoszka:reeder:2012}. The above list does not
include many other important contributions.
In its simplest, but most
important form, it is about testing if curves obtained from two
populations have the same mean functions. The most direct
approach, developed into a bootstrap procedure by
\citet{benko:hardle:kneip:2009}, is to look at the norm of the difference
of the estimated mean functions. In this section, we show that
the normal approximation of Section~\ref{s:approx} leads to an
asymptotic test whose  limit  distribution is standard normal.

Suppose we have two random samples of
functions: $X_1,\ldots,X_N$ and $Y_1,\ldots,
Y_M$. We assume the $X$ sample satisfies (\ref{null-model}) and
Assumptions \ref{as-1}, \ref{as-2} and \ref{as-6}. Similarly, the $Y$
sample is a location model given by
\beq\label{y-model}
Y_i(t)=\mu_*(t)+Q_i(t),\;\;\;1\leq i \leq M,
\eeq
where $\mu_*(t)$ is the common mean of the $Y$ sample and
$EQ_i(t)=0$.
As in the case of the $X$ sample, the $Y$ sample satisfies the following
conditions:
\begin{assumption}\label{as-1-v} $Q_1, Q_2,\ldots, Q_M$
are independent and identically distributed.
\end{assumption}
\begin{assumption}\label{as-2-v} $\int\mu_*^2(t)dt<\infty$
and $E\norm Q_1\norm^3<\infty$.
\end{assumption}

Assumption \ref{as-2-v} yields that
$$
{\mathfrak c}_*(t,s)=EQ_1(t)Q_1(s)
$$
is a square integrable function on the unit square.\\
In this section we are interested in testing the null hypothesis
$$
H_0^*:\;\;\mu(\cdot)=\mu_*(\cdot).
$$
The statistical inference to test $H_0$ is based on
the difference $\bar{X}_N-\bar{Y}_M$,
where $\bar{X}_N$ and $\bar{Y}_M$ denote the sample means.
We assume
\begin{assumption}\label{m-1}
$$
\frac{N}{M}=\lambda +O(N^{-1/4})\;\;\;\mbox{as}\;\;\min(M,N)\to\infty
$$
with some $0<\lambda <\infty$.
\end{assumption}
Now we define the pooled covariance function
\[
{\mathfrak c}_P(t,s)={\mathfrak c}(t,s)+\lambda {\mathfrak c}_*(t,s).
\]
Since ${\mathfrak c}_P(t,s)$ is a positive--definite, symmetric, square
integrable function,  there are real
numbers $\kappa_1\geq \kappa_2\geq \ldots $ and orthonormal
functions $u_1, u_2,\ldots$ satisfying
\[
\kappa_i u_i(t)=\int {\mathfrak c}_P(t,s)u_i(s)ds,\;\;i=1,2,\ldots.
\]
We wish to project $\bar{X}_N-\bar{Y}_M$ into the space spanned by
$u_1,\ldots ,u_d$, where $d=d(N)\to \infty$, so similarly to
Assumption \ref{as-5} we require
\begin{assumption}\label{m-2}\;\;$\kappa_1> \kappa_2>\kappa_3>\ldots$
\end{assumption}
\begin{assumption}\label{m-3}
$$
N^{-3/32}d^{1/4}\left(\sum_{\ell=1}^d 1/\kappa_\ell\right)^{3/8}\to 0.
$$
\end{assumption}
Our test statistic is
$$
D_{N,M}=\sum_{i=1}^dN\la \bar{X}_N-\bar{Y}_M, u_i\ra^2/\kappa_i.
$$
As in Section \ref{s:cp},
we need additional assumptions balancing
the rate of growth of $d=d(N)$ and the rate of decay of the $\kappa_\ell$
and the differences between them.
\begin{assumption}\label{m-5}
\begin{align*}
\frac{1}{d^{1/2}N^{1/4}}\sum_{\ell=1}^d\frac{1}{\kappa_\ell^2}
\to 0\;\;\;\;\mbox{and}\;\;\;\;
 \frac{1}{d^{1/2}N^{1/4}}\sum_{\ell=1}^d\frac{1}{\kappa_\ell \iota_\ell}\to 0,
\end{align*}
where $\iota_1=\kappa_2-\kappa_1,\ \iota_\ell=\min(\iota_{\ell-1}-\iota_\ell, \iota_{\ell}-\iota_{\ell+1}),\
\ell\geq 2$.
\end{assumption}

Since $u_1,u_2,\ldots $ are unknown, we replace them with the
corresponding empirical eigenfunctions $\hat{u}_1, \hat{u}_2,\ldots $
defined by the integral operator
\[
\hat{\kappa}_i \hat{u}_i(t)
=\int \hat{\mathfrak c}_P(t,s)\hat{u}_i(s)ds,\;\;\ i=1,2,\ldots,
\]
where $\hat{\kappa}_1\geq \hat{\kappa}_2\geq \ldots$ and
$$
\hat{\mathfrak c}_P(t,s)=\hat{\mathfrak c}_N(t,s)
+\frac{N}{M}\hat{\mathfrak c}_{*M}(t,s),
$$
with
$$
\hat{\mathfrak c}_{*M}(t,s)
=\frac{1}{M}\sum_{\ell=1}^M(Y_\ell(t)-\bar{Y}_M(t))(Y_\ell(s)-\bar{Y}_M(s)).
$$
The empirical version of $D_{N,M}$ is
$$
\widehat{D}_{N,M}
=\sum_{i=1}^dN\la \bar{X}_N-\bar{Y}_M, \hat{u}_i\ra^2/\hat{\kappa}_i.
$$

\begin{theorem}\label{th-two-1} If $H^*_0$,
Assumptions \ref{as-1}, \ref{as-2} and \ref{as-1-v}--\ref{m-5}
hold, then
\[
(2d)^{-1/2}(\widehat{D}_{N,M}-d)\;\;\;
\stackrel{{\mathcal D}}{\to}\;\;\;N(0,1),
\]
where $N(0,1)$ stands for a  standard normal random variable.
\end{theorem}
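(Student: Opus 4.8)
Throughout I work under $H_0^*$, where $\bar X_N-\bar Y_M=\bar Z_N-\bar Q_M$ with $\bar Z_N=N^{-1}\sum_{i=1}^N Z_i$ and $\bar Q_M=M^{-1}\sum_{\ell=1}^M Q_\ell$. The overall strategy is to exhibit $\widehat D_{N,M}$ as a quadratic form in an asymptotically standard Gaussian $d$--vector and then to apply the central limit theorem for $\chi^2_d$ as $d\to\infty$. Set
\[
\zeta_j=\kappa_j^{-1/2}N^{1/2}\la \bar Z_N-\bar Q_M,u_j\ra,\qquad 1\le j\le d,
\]
and let $\mathbf W=(\zeta_1,\dots,\zeta_d)^T$, so that the population statistic is $D_{N,M}=\sum_{j=1}^d\zeta_j^2=\norm\mathbf W\norm^2$. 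The key algebraic observation, which dictates the normalisation by $\kappa_j$ and the centring and scaling by $d$ and $(2d)^{1/2}$, is that the pooled eigenfunctions diagonalise the limiting covariance of the projected mean difference: using ${\mathfrak c}_P={\mathfrak c}+\lambda{\mathfrak c}_*$, $N/M\to\lambda$, the eigen--equation $\int {\mathfrak c}_P(t,s)u_k(s)ds=\kappa_k u_k(t)$ and orthonormality of the $u_j$,
\[
\mathrm{Cov}(\zeta_j,\zeta_k)
=\frac{E\la Z_1,u_j\ra\la Z_1,u_k\ra+(N/M)\,E\la Q_1,u_j\ra\la Q_1,u_k\ra}{(\kappa_j\kappa_k)^{1/2}}
\;\longrightarrow\;\delta_{jk}.
\]

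The first step is to pass from the empirical eigenelements to the population ones, i.e.\ to show $\widehat D_{N,M}-D_{N,M}=o_P(d^{1/2})$. Writing $1/\hat\kappa_i-1/\kappa_i$ and $\hat u_i-u_i$ through the operator perturbation $\hat{\mathfrak c}_P-{\mathfrak c}_P$, whose Hilbert--Schmidt norm is $O_P(N^{-1/2})$ by Assumptions \ref{as-1}, \ref{as-1-v} and the third--moment conditions, and using $|\hat\kappa_i-\kappa_i|\le\norm\hat{\mathfrak c}_P-{\mathfrak c}_P\norm$ together with the standard bound $\norm\hat u_i-u_i\norm\le c\,\norm\hat{\mathfrak c}_P-{\mathfrak c}_P\norm/\iota_i$, the difference $\widehat D_{N,M}-D_{N,M}$ splits into a sum over $i\le d$ of contributions of order $N^{-1/2}\zeta_i^2/\kappa_i$ and $N^{-1/2}|\zeta_i|/(\kappa_i^{1/2}\iota_i)$. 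The two parts of Assumption \ref{m-5} are calibrated precisely so that, after summation over the growing number of coordinates, these are $o_P(d^{1/2})$; the $\iota_\ell$--part controls the accumulation of the eigenfunction estimation errors, which is the most delicate accounting here.

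Next I would Gaussianise $\mathbf W$. Applying the coupling underlying Theorem~\ref{approx} to the two independent samples separately, now projected on $u_1,\dots,u_d$ and combined through the weight $N/M$, produces on a suitable probability space a Gaussian vector $\mathbf G$ with covariance $\Sigma_d=I_d+E$ such that $\mathbf W=\mathbf G+\mathbf R$ with $\norm\mathbf R\norm=o_P(1)$; Assumption~\ref{m-3} is the two--sample analogue of Assumptions \ref{d-1}--\ref{d-3} that renders the coupling remainder negligible in the quadratic form. Since $\norm\mathbf G\norm=O_P(d^{1/2})$, the expansion
\[
\norm\mathbf W\norm^2=\norm\mathbf G\norm^2+2\,\mathbf G^T\mathbf R+\norm\mathbf R\norm^2
=\norm\mathbf G\norm^2+o_P(d^{1/2})
\]
reduces matters to $\norm\mathbf G\norm^2$. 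Writing $\mathbf G=\Sigma_d^{1/2}\mathbf g$ with $\mathbf g\sim N(0,I_d)$, the perturbation $E$ collects the deviation $N/M-\lambda=O(N^{-1/4})$ from Assumption~\ref{m-1}, and the remainder $\mathbf g^T E\,\mathbf g$ has mean $\mathrm{tr}(E)$ and standard deviation of order $\norm E\norm_F$; the first part of Assumption~\ref{m-5}, involving $\sum_\ell 1/\kappa_\ell^2$, forces both to be $o(d^{1/2})$, so that $\norm\mathbf G\norm^2=\norm\mathbf g\norm^2+o_P(d^{1/2})$ with $\norm\mathbf g\norm^2\sim\chi^2_d$.

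Finally, the central limit theorem for $\chi^2_d$ closes the argument: since $d=d(N)\to\infty$,
\[
(2d)^{-1/2}\bigl(\norm\mathbf g\norm^2-d\bigr)=(2d)^{-1/2}\sum_{j=1}^d(g_j^2-1)\;\stackrel{{\mathcal D}}{\to}\;N(0,1)
\]
by Lyapunov's theorem applied to the i.i.d.\ summands $g_j^2-1$ (mean $0$, variance $2$, finite third moment), and Slutsky's lemma then yields $(2d)^{-1/2}(\widehat D_{N,M}-d)\stackrel{{\mathcal D}}{\to}N(0,1)$. I expect the main obstacle to be the first step: propagating the empirical eigenelement errors through the quadratic form as $d\to\infty$ and the gaps $\iota_\ell$ shrink, because this is exactly where only three finite moments are available, so that the Banach--space estimates for $\norm\hat{\mathfrak c}_P-{\mathfrak c}_P\norm$ must be used carefully, and where the coordinatewise errors are most prone to accumulate.
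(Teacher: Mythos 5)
Your overall architecture (Gaussian coupling, $\chi^2_d$ CLT, perturbation from empirical to population eigenelements) matches the paper's, but there is a genuine gap at the Gaussianization step. You propose to apply the coupling underlying Theorem~\ref{approx} ``to the two independent samples separately, projected on $u_1,\dots,u_d$.'' The coupling tool there, Lemma~\ref{sena} (Senatov plus Strassen), is stated for i.i.d.\ vectors with \emph{identity} covariance, and the per--sample score vectors $\bigl(\kappa_j^{-1/2}\la Z_i,u_j\ra\bigr)_{j\le d}$ and $\bigl(\kappa_j^{-1/2}\la Q_i,u_j\ra\bigr)_{j\le d}$ do not have identity covariance: the $u_j$ diagonalize only the pooled operator ${\mathfrak c}+\lambda{\mathfrak c}_*$, so each individual matrix $\bigl((\kappa_j\kappa_k)^{-1/2}\intt {\mathfrak c}(t,s)u_j(t)u_k(s)\,dt\,ds\bigr)_{j,k}$ can be far from diagonal and even singular --- nothing in Assumptions \ref{m-2}--\ref{m-5} rules out $\la \text{(covariance of one sample)}\,u_j,u_j\ra=0$ for some $j\le d$, since only the pooled eigenvalues $\kappa_j$ are assumed positive and distinct. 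Whitening each sample would require invertibility and quantitative control of these $d\times d$ matrices, which the hypotheses do not supply. The paper's key idea, which your proposal is missing, is a blocking device: with $K=\lfloor N^{3/4}\rfloor$, $R=\lfloor N/K\rfloor$, $L=\lfloor M/K\rfloor$, it forms the i.i.d.\ blocks $A_i=\sum_{\ell=R(i-1)+1}^{iR}Z_\ell-(N/M)\sum_{\ell=L(i-1)+1}^{iL}Q_\ell$, $1\le i\le K$, and projects them on the eigenfunctions $\bar u_j$ of the finite--sample pooled covariance ${\mathfrak c}_{N,M}={\mathfrak c}+(N^2L/(M^2R)){\mathfrak c}_*$, so that the standardized block scores $\bpsi_i$ have \emph{exactly} identity covariance and Lemma~\ref{sena} applies verbatim, with the leftover observations collected in a remainder $\tilde A$ controlled by Markov's inequality and with $\norm {\mathfrak c}_{N,M}-{\mathfrak c}_P\norm=O(N^{-1/4})$ by Assumption~\ref{m-1}. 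Without this construction, or some coupling theorem valid for arbitrary (possibly degenerate) covariance, your separate--samples step does not go through.

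A second, more local error: you assert $\norm \hat{\mathfrak c}_P-{\mathfrak c}_P\norm=O_P(N^{-1/2})$ ``by the third--moment conditions.'' Under $E\norm Z_1\norm^3<\infty$ and $E\norm Q_1\norm^3<\infty$ only, the summands $Z_i\otimes Z_i$ have merely $3/2$ moments in Hilbert--Schmidt norm, so the CLT rate $N^{-1/2}$ is unavailable; the Marcinkiewicz--Zygmund law in Banach space yields $O_P(N^{-1/3})$ for the covariance estimation error, and the deterministic discrepancy $N/M-\lambda=O(N^{-1/4})$ leads the paper to work with the overall rate $O_P(N^{-1/4})$. Your first step happens to survive with the corrected rate, because Assumption~\ref{m-5} is calibrated to $N^{-1/4}$, but as written your justification invokes a fourth--moment--type rate that the hypotheses do not deliver --- exactly the pitfall the paper's Banach--space arguments are designed to avoid. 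The remainder of your argument (the $I_d+E$ covariance perturbation with trace and Frobenius controls via Assumptions \ref{m-1} and \ref{m-5}, the eigenvalue/eigenfunction perturbation with the gaps $\iota_\ell$ modulo the random signs $\hat d_i$ you omit, the $\chi^2_d$ central limit theorem, and Slutsky) is sound and matches the paper's endgame.
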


\section{\bf A small simulation
study and a data example} \label{s:stefan}
\setcounter{equation}{0}
The main contribution of this paper lies in the statistical theory, but it is
of interest to check if the new tests derived in Sections \ref{s:cp}
and \ref{s:two-s} perform well in finite samples. We report the results
for the test based on Theorem~\ref{th-1} in some detail, as it
utilizes the convergence of the two--parameter process in full force, and
such an approach has not been used before. We also comment
on the tests based on Corollary~\ref{col} and Theorem~\ref{th-two-1}.
We conclude this section with an illustrative  data example.

The simulated data which satisfy the null hypotheses of Sections \ref{s:cp}
and \ref{s:two-s} are generated as independent Brownian motions
on the interval $[0,1]$. We generate them by using iid normal increments
on 1,000 equispaced points in $[0,1]$ (random walk approximation).
(Example \ref{ex-1} shows that for the Brownian motion the assumptions of
Theorem \ref{th-1} are satisfied.)
Alternatives are obtained by
adding the curve $at(1-t)$ after a change--point or to the observations
in the second sample. The parameter $a$ regulates the size of the change
or the difference in the means in two samples.

\begin{table}
\centering
\begin{tabular}{cccc}
\toprule
$\alpha$ & 0.01 & 0.05 & 0.10\\
\cmidrule(lr){1-4}
& 0.109256 & 0.0726292 & 0.0578267\\
\bottomrule
\end{tabular}
\bigskip
\caption{Critical values for the distribution of \eqref{TLD}.}\label{CVT}
\end{table}

\begin{table}
\centering
\begin{tabular}{ccccc}
$N = 100$&&&&\\
\midrule
 &  \multicolumn{2}{c}{$\alpha = 0.05$} & \multicolumn{2}{c}{$\alpha = 0.1$}\\
\cmidrule(lr){2-3}\cmidrule(lr){4-5}
$d$ & $\hat{p}$ & $[a,b]$ & $\hat{p}$ & $[a,b]$\\
\cmidrule(lr){1-1}\cmidrule(lr){2-2}\cmidrule(lr){3-3}\cmidrule(lr){4-4}\cmidrule(lr){5-5}
2 & 0.047 & [0.0360,0.0580] & 0.058 & [0.0458,0.0702]\\
3 & 0.056 & [0.0440,0.0680] & 0.074 & [0.0604,0.0876]\\
4 & 0.060 & [0.0476,0.0724] & 0.081 & [0.0668,0.0952]\\
5 & 0.059 & [0.0467,0.0713] & 0.089 & [0.0742,0.1038]\\
6 & 0.057 & [0.0449,0.0691] & 0.089 & [0.0742,0.1038]\\
7 & 0.056 & [0.0440,0.0680] & 0.089 & [0.0742,0.1038]\\
8 & 0.059 & [0.0467,0.0713] & 0.091 & [0.0760,0.1060]\\
9 & 0.051 & [0.0396,0.0624] & 0.090 & [0.0751,0.1049]\\
10 & 0.050 & [0.0387,0.0613] & 0.082 & [0.0677,0.0963]\\
11 & 0.054 & [0.0422,0.0658] & 0.083 & [0.0687,0.0973]\\
12 & 0.057 & [0.0449,0.0691] & 0.079 & [0.0650,0.0930]\\
13 & 0.059 & [0.0467,0.0713] & 0.075 & [0.0613,0.0887]\\
14 & 0.057 & [0.0449,0.0691] & 0.076 & [0.0622,0.0898]\\
15 & 0.056 & [0.0440,0.0680] & 0.075 & [0.0613,0.0887]\\
&&&&\\
$N = 200$&&&&\\
\midrule
 &  \multicolumn{2}{c}{$\alpha = 0.05$} & \multicolumn{2}{c}{$\alpha = 0.1$}\\
\cmidrule(lr){2-3}\cmidrule(lr){4-5}
$d$ & $\hat{p}$ & $[a,b]$ & $\hat{p}$ & $[a,b]$\\
\cmidrule(lr){1-1}\cmidrule(lr){2-2}\cmidrule(lr){3-3}\cmidrule(lr){4-4}\cmidrule(lr){5-5}
2 & 0.039 & [0.0289,0.0491] & 0.055 & [0.0431,0.0669]\\
3 & 0.048 & [0.0369,0.0591] & 0.070 & [0.0567,0.0833]\\
4 & 0.049 & [0.0378,0.0602] & 0.075 & [0.0613,0.0887]\\
5 & 0.053 & [0.0413,0.0647] & 0.076 & [0.0622,0.0898]\\
6 & 0.057 & [0.0449,0.0691] & 0.085 & [0.0705,0.0995\\
7 & 0.057 & [0.0449,0.0691] & 0.085 & [0.0705,0.0995]\\
8 & 0.053 & [0.0413,0.0647] & 0.085 & [0.0705,0.0995]\\
9 & 0.051 & [0.0396,0.0624] & 0.083 & [0.0687,0.0973]\\
10 & 0.051 & [0.0378,0.0602] & 0.081 & [0.0668,0.0952]\\
11 & 0.054 & [0.0496,0.0624] & 0.083 & [0.0687,0.0973]\\
12 & 0.052 & [0.0405,0.0635] & 0.086 & [0.0714,0.1006]\\
13 & 0.050 & [0.0387,0.0613] & 0.087 & [0.0723,0.1017]\\
14 & 0.054 & [0.0422,0.0658] & 0.086 & [0.0714,0.1006]\\
15 & 0.052 & [0.0405,0.0635] & 0.079 & [0.0650,0.0930]\\
\end{tabular}
 \bigskip
\caption{Empirical sizes and 90\% confidence intervals
for the probability of rejection  for  the change--point test based on
 convergence \eqref{e:wc}.
} \label{ES-CI}
\end{table}

Many tests can be obtained from Theorem~\ref{th-1} by applying
functionals continuous on ${\mathcal D}[0,1]^2$. It is not our objective
to provide a systematic comparison, we consider only the test based on the
weak convergence
\begin{equation} \label{e:wc}
\int_0^1\int_0^1 \hat{Z}^2_N(u,x)dudx\;\to\;\;
\int_0^1\int_0^1\Gamma^2(u,x) dudx.
\end{equation}
To compute the critical values,
we use the following representation of the limit
\begin{align}
 \int_0^1\int_0^1 \Gamma^2(u,x) du dx
&\stackrel{\mathcal{D}}{=}
\sum_{1\leq k,\ell<\infty}\lambda_k \nu_\ell N_{k,\ell}^2.\label{LD}
\end{align}
In \eqref{LD}, the
$\lambda_k = (\pi (k-1/2))^{-2}$ are the eigenvalues of the Wiener
process, the $\nu_\ell$ are the eigenvalues of the covariance operator
with kernel $2(\min(s,t)-st)^2$,  and $\{N_{k,\ell}\}$ is an
array of independent standard normal random variables. The critical
values were determined for a truncated version of the right--hand side
of \eqref{LD} with truncation level 49, i.e. for
\begin{align}
\sum_{1\leq k,\ell\leq 49}\lambda_k \nu_\ell N_{k,\ell}^2.\label{TLD}
\end{align}
Since the eigenvalues $\nu_\ell$ are difficult to determine
explicitly, they were calculated numerically using the \texttt{R}
package \texttt{fda}, cf.\citet{ramsay:hooker:graves:2009}.  The
simulated critical values based on 100,000 replications of \eqref{TLD}
are provided in Table~\ref{CVT}.

\begin{table}
\centering
\begin{tabular}{ccccc}
$N = 100$&&&&\\
\midrule
 &  \multicolumn{2}{c}{$a = 1$} & \multicolumn{2}{c}{$a = 1.5$}\\
\cmidrule(lr){2-3}\cmidrule(lr){4-5}
$d$ & $\alpha = 0.05$ & $\alpha = 0.10$ & $\alpha = 0.05$ & $\alpha = 0.10$\\
\cmidrule(lr){1-1}\cmidrule(lr){2-2}\cmidrule(lr){3-3}\cmidrule(lr){4-4}\cmidrule(lr){5-5}
2 & 0.168 & 0.192 & 0.356 & 0.398 \\
3 & 0.456 & 0.517 & 0.819 & 0.851 \\
4 & 0.501 & 0.564 & 0.843 & 0.875 \\
5 & 0.496 & 0.564 & 0.855 & 0.887 \\
6 & 0.481 & 0.552 & 0.847 & 0.883 \\
7 & 0.473 & 0.543 & 0.843 & 0.881 \\
8 & 0.465 & 0.530 & 0.834 & 0.874 \\
9 & 0.461 & 0.519 & 0.823 & 0.870 \\
10 & 0.453 & 0.504 & 0.812 & 0.859 \\
11 & 0.441 & 0.501 & 0.802 & 0.853 \\
12 & 0.431 & 0.496 & 0.793 & 0.844 \\
13 & 0.420 & 0.484 & 0.791 & 0.834 \\
14 & 0.400 & 0.472 & 0.782 & 0.822 \\
15 & 0.388 & 0.467 & 0.767 & 0.817 \\
&&&&\\
$N = 200$&&&&\\
\midrule
 &  \multicolumn{2}{c}{$a = 1$} & \multicolumn{2}{c}{$a = 1.5$}\\
\cmidrule(lr){2-3}\cmidrule(lr){4-5}
$d$ & $\alpha = 0.05$ & $\alpha = 0.10$ & $\alpha = 0.05$ & $\alpha = 0.10$\\
\cmidrule(lr){1-1}\cmidrule(lr){2-2}\cmidrule(lr){3-3}\cmidrule(lr){4-4}\cmidrule(lr){5-5}
2 & 0.327 & 0.370 & 0.620  & 0.660 \\
3 & 0.784 & 0.814 & 0.984 & 0.991 \\
4 & 0.808 & 0.849 & 0.988 & 0.994 \\
5 & 0.823 & 0.860 & 0.992 & 0.994 \\
6 & 0.825 & 0.863 & 0.991 & 0.996 \\
7 & 0.819 & 0.864 & 0.992 & 0.994 \\
8 & 0.814 & 0.859 & 0.990 & 0.994 \\
9 & 0.802 & 0.846 & 0.990 & 0.993 \\
10 & 0.791 & 0.837 & 0.990 & 0.993 \\
11 & 0.766 & 0.830 & 0.988 & 0.992 \\
12 & 0.754 & 0.821 & 0.987 & 0.992 \\
13 & 0.740 & 0.800 & 0.987 & 0.991 \\
14 & 0.734 & 0.794 & 0.987 & 0.991 \\
15 & 0.726 & 0.787 & 0.986 & 0.990 \\
\end{tabular}
 \bigskip
 \caption{Power of the  test based on convergence
\eqref{e:wc}. The change--point is at $k^* = \lfloor N/2\rfloor$.}
 \label{power}
\end{table}

\begin{figure}
 \centering
 \includegraphics[scale = 0.775,angle = 270]{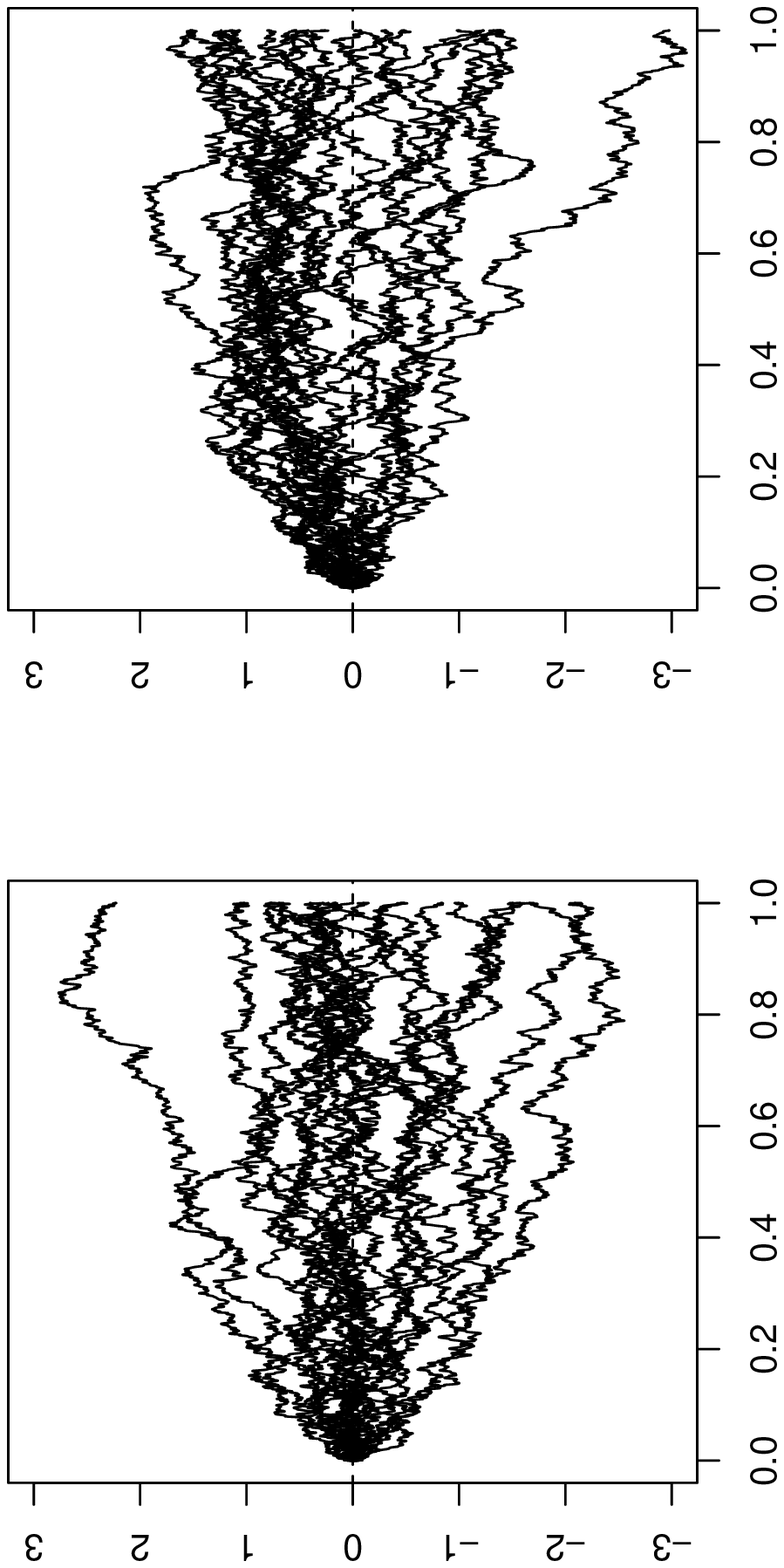}
\caption{Left panel: 20 realizations of the Brownian motion;
Right panel: independent 20 realizations of the Brownian motion with
the curve $at(1-t), a =1.5$ added. }
\label{fig:a-1-5}
\end{figure}

Table~\ref{ES-CI} shows the
empirical sizes $\hat{p}$, i.e. the fraction of rejections,
as well as asymptotic $90\%$ confidence intervals
\begin{align}
\left[\hat{p}-1.654\sqrt{\frac{\hat{p}(1-\hat{p})}{R}}, \ \
\hat{p}+1.654\sqrt{\frac{\hat{p}(1-\hat{p})}{R}}\right].\label{CI-def}
\end{align}
for the probability $p$ of rejection. The entries are based on $R=1,000$
replications. The table shows that the test based on convergence
\eqref{e:wc} has correct empirical size at the 5\% level and is a bit
too conservative at the 10\% level. However even at the 10\% level
the empirical sizes for $d\ge 3$ are not significantly different; they all
fall into each others 90\% confidence intervals. This illustrates the
main point that for the tests that use the asymptotics with $d\to \infty$
developed in the paper, selecting $d$ is not essential; every sufficiently
large $d$ gives the same conclusion on the significance.

The empirical power of the test is reported in Table~\ref{power}.
Again, for $d\ge 3$, the power remains statistically the same.  We
note that the change in mean equal to the function $at(1-t)$ with
$a=1.5$ is fairly small if the ``noise curves'' are Brownian motions.
This is illustrated in Figure~\ref{fig:a-1-5} which shows 20 Brownian
motions in the left panel and another independent sample of 20
Brownian motions with the curve $at(1-t), a=1.5$ added. If one knows
that this curve was added, one can discern it in the plot in the right
panel, but the difference would have been much less obvious if
individual curves were observed, as in the change--point setting
relevant to Table~\ref{power}.

Regarding Corollary~\ref{col}, we found out that the test based on
convergence \eqref{col-2} has empirical size only slightly higher than
nominal (about 1\% at 5\% level).  For $d\ge 3$, the empirical size
does not depend on $d$.  The test based on \eqref{col-3} severely
overrejects for $N=100$, and we do not recommend it.  The test based
on Theorem~\ref{th-two-1} overrejects by about 2\% at the 5\% level,
and by about 1\% at the 10\% level.  The power of the test is above
95\% for $N, M=100$ and $a=1.0$, and practically 100\% for larger $a$
or $N,M$. For $d\ge 2$, the rejection probabilities do not depend on
$d$.

\bigskip

\noindent{\bf Change--point analysis of annual temperature profiles.}
The goal of
this section is to illustrate the application of the change--point test
based on convergence \eqref{e:wc}.  Change--point analysis is an
important field of statistics with a large number of applications, the
recent monographs of \citet{chen:gupta:2011} and
\citet{basseville:nikifirov:tartakovsky:2012} provide numerous
references. The change--point problem in the context of functional data
has also received some attention, we refer to \citet{HKbook}
for the references,  \citet{aston:kirch:2012} report some most
recent research.

The data set we study consists of 156 years (1856-2011) of minimum
daily temperatures in Melbourne. These data are available at
\url{www.bom.gov.au} (the Australian Bureau of Meteorology website).
The original data can be viewed as 156 curves with 365 measurements on
each curve.  We converted them to functional objects in \texttt{R}
using 49 Fourier basis functions. Five consecutive functions are shown
in Figure~\ref{fig:5curves}. It is important to emphasize the
difference between the data we use and the Canadian temperature data
made popular by the books of \citet{ramsay:silverman:2005} and
\citet{ramsay:hooker:graves:2009}. The Canadian temperature curves are
the curves at 35 locations in Canada obtained by averaging annual
temperature over forty years. Since each such curve is an average of
forty curves like those shown in Figure~\ref{fig:5curves}, those
curves are much smoother, and the first two FPC's are sufficient to
describe their variability. Even after smoothing with 49 Fourier
functions, the annual temperature curves exhibit noticeable year to
year variability, and a larger number of FPC's is needed to capture
it, see Table~\ref{tab:5}.  The goals of our analysis are also different
from those of \citet{ramsay:silverman:2005}. We are interested in
detecting a change in the mean function using a sequence of noisy
curves; the examples in \citet{ramsay:silverman:2005} used the
averaged curves to describe static regression type dependencies
between climatic variables.

\begin{figure}
 \centering
 \includegraphics[scale = 0.775,angle = 270]{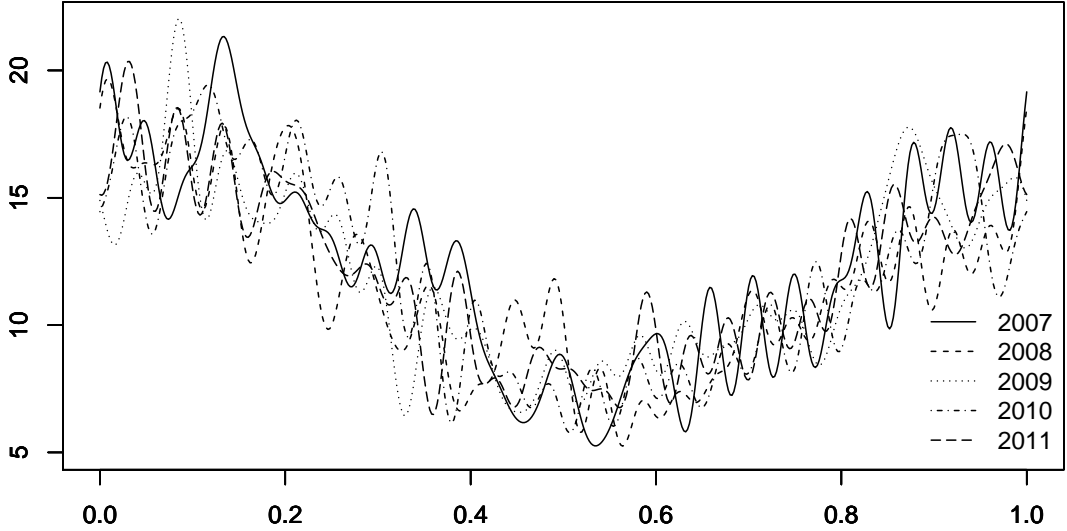}
\caption{Five annual temperature curves represented as functional objects. }
\label{fig:5curves}
\end{figure}

\begin{figure}
 \centering
 \includegraphics[scale = 0.775,angle = 270]{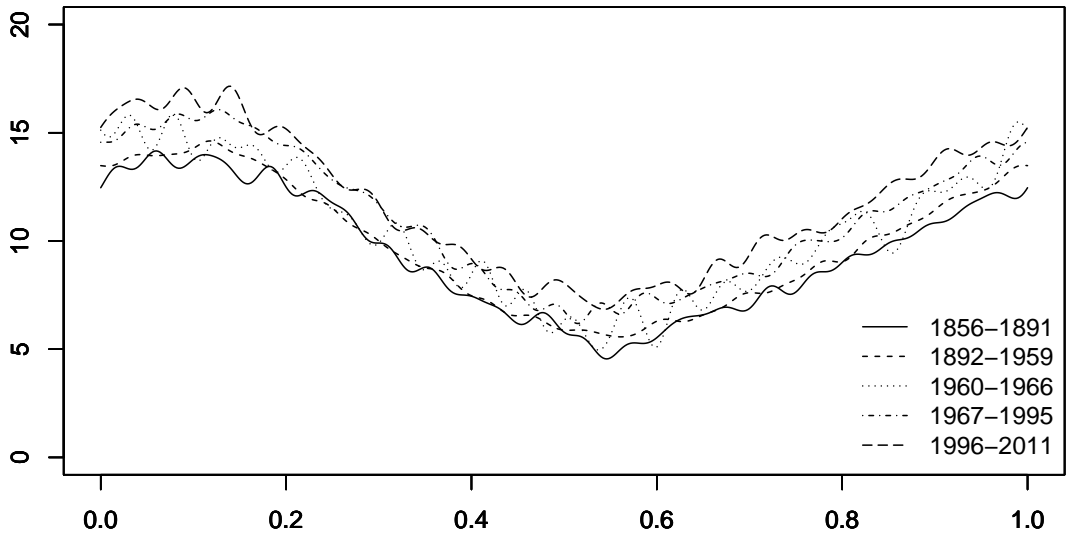}
\caption{Average temperature functions in the estimated partition segments.}
\label{fig:4}
\end{figure}

\begin{table}
{
\begin{center}
\begin{tabular}{ c c c c c c c c c } \toprule
\multicolumn{1}{ c }{$k$}&\multicolumn{1}{c }{1}&\multicolumn{1}{c }{2}&\multicolumn{1}{c }{3}&\multicolumn{1}{c }{4}&\multicolumn{1}{c }{5}&\multicolumn{1}{c }{6}&\multicolumn{1}{c }{7}&\multicolumn{1}{c }{8}\\
\cmidrule{1-1}\cmidrule(lr){2-9}
$\hat{\lambda}_k$&0.7151&0.1469&0.1295&0.1154&0.1046&0.1021&0.0944&0.0868\\
$f_p$&0.2248&0.2711&0.3118&0.3480&0.3809&0.4130&0.4427&0.4700\\
\midrule
\multicolumn{1}{ c }{$k$}&\multicolumn{1}{c }{9}&\multicolumn{1}{c }{10}&\multicolumn{1}{c }{11}&\multicolumn{1}{c }{12}&\multicolumn{1}{c }{13}&\multicolumn{1}{c }{14}&\multicolumn{1}{c }{15}&\multicolumn{1}{c }{16}\\
\cmidrule{1-1}\cmidrule(lr){2-9}
$\hat{\lambda}_k$&0.0845&0.0833&0.0758&0.0732&0.0726&0.0687&0.0661&0.0641\\
$f_p$&0.4966&0.5228&0.5466&0.5696&0.5925&0.6141&0.6349&0.6550\\
\midrule
\multicolumn{1}{ c }{$k$}&\multicolumn{1}{c }{17}&\multicolumn{1}{c }{18}&\multicolumn{1}{c }{19}&\multicolumn{1}{c }{20}&\multicolumn{1}{c }{21}&\multicolumn{1}{c }{22}&\multicolumn{1}{c }{23}&\multicolumn{1}{c }{24}\\
\cmidrule{1-1}\cmidrule(lr){2-9}
$\hat{\lambda}_k$&0.0620&0.0586&0.0559&0.0559&0.0534&0.0508&0.0472&0.0463\\
$f_p$&0.6745&0.6930&0.7105&0.7281&0.7449&0.7609&0.7757&0.7903\\
\midrule
\multicolumn{1}{ c }{$k$}&\multicolumn{1}{c }{25}&\multicolumn{1}{c }{26}&\multicolumn{1}{c }{27}&\multicolumn{1}{c }{28}&\multicolumn{1}{c }{29}&\multicolumn{1}{c }{30}&\multicolumn{1}{c }{31}&\multicolumn{1}{c }{32}\\
\cmidrule{1-1}\cmidrule(lr){2-9}
$\hat{\lambda}_k$&0.0440&0.0427&0.0426&0.0400&0.0377&0.0367&0.0359&0.0325\\
$f_p$&0.8041&0.8175&0.8309&0.8435&0.8553&0.8669&0.8782&0.8884\\
\midrule
\multicolumn{1}{ l  }{$k$}&\multicolumn{1}{c }{33}&\multicolumn{1}{c }{34}&\multicolumn{1}{c }{35}&\multicolumn{1}{c }{36}&\multicolumn{1}{c }{37}&\multicolumn{1}{c }{38}&\multicolumn{1}{c }{39}&\multicolumn{1}{c }{40}\\
\cmidrule{1-1}\cmidrule(lr){2-9}
$\hat{\lambda}_k$&0.0320&0.0299&0.0281&0.0274&0.0252&0.0248&0.0228&0.0211\\
$f_p$&0.8985&0.9079&0.9167&0.9253&0.9332&0.9410&0.9482&0.9548\\
\midrule
\multicolumn{1}{ l  }{$k$}&\multicolumn{1}{c }{41}&\multicolumn{1}{c }{42}&\multicolumn{1}{c }{43}&\multicolumn{1}{c }{44}&\multicolumn{1}{c }{45}&\multicolumn{1}{c }{46}&\multicolumn{1}{c }{47}&\multicolumn{1}{c }{48}\\
\cmidrule{1-1}\cmidrule(lr){2-9}
$\hat{\lambda}_k$&0.0207&0.0201&0.0188&0.0171&0.0166&0.0163&0.0129&0.0114\\
$f_p$&0.9614&0.9677&0.9736&0.9790&0.9842&0.9893&0.9934&0.9969\\
\bottomrule
\end{tabular}
\end{center}
}

\medskip

\caption{Eigenvalues and percentage of variance explained by the first $k$
eigenvalues, i.e.
$f_k = \sum_{i = 1}^k\hat{\lambda}_i/\sum_{j = 1}^N\hat{\lambda}_j$,
for $k = 1,2,\ldots,49$.}
\label{tab:5}
\end{table}

The analysis proceeds through the usual binary segmentation procedure.
The test is first applied to the whole data set. If the P--value is small,
the change--point is estimated as
\[
 \hat{\theta}_N = \inf\{k: I_N(k) = \sup_{1\leq j\leq N} I_N(j)\},
\]
where
\[
 I_N(\ell) = \frac {1}{d^2}\sum_{i = 1}^{d-1}\left(\sum_{j = 1}^i\left\{\frac 1N\left[\hat{S}_j(\ell ) - \frac \ell N \hat{S}_j(N)\right]^2 - \frac \ell N\left(\frac{N-\ell }{N}\right)\right\}\right)^2.
\]
($I_N$ is a discretization of $\hat Z_N$.) The test is then applied to
the two segments, and the procedure continues until no change--points
are detected. In practice, a procedure of this type detects only a few
change--points (four in our case), so the problems of multiple testing
are not an issue.  We applied the test using many values of $d$, and
we were pleased to see that the final segmentation does not depend on
$d$.  Table~\ref{tab:4} shows the outcome. The estimated change--points
are the years 1892, 1960, 1967, 1996.  It is clear that the change--point
model is not an exact climatological model for the evolution of
annual temperature curves, but it is popular in climate studies, see
e.g. \citet{gallagher:lund:robbins:2012}, as it allows us  to attach
statistical significance to conclusions and provides periods of
approximately constant mean temperature profiles. In this light, the
weak evidence for a change--point in 1967 could be viewed as
indicating an accelerated change in the period 1960--1995.  The
estimated mean temperature curves over the segments of approximately
constant mean are shown in Figure~\ref{fig:4}. An increasing pattern
of the mean temperature is seen; the mean curve shifted upwards by
about two degrees Celsius over the last 150 years. This could be due
to the conjectured global temperature increase or the urbanization of
the Melbourne area, or a combination of both.  A discussion of such
issues is however beyond the intended scope of this paper.

\begin{table}
{
\begin{center}
\begin{tabular}{lccrrrr} \toprule
It.& Segment & Estimated & \multicolumn{4}{c}{P-value}\\
\cmidrule(lr){4-7}
&&change--point&\multicolumn{1}{c}{$d = 3$}&\multicolumn{1}{c}{$d = 4$}&\multicolumn{1}{c}{$d = 5$}&\multicolumn{1}{c}{$d = 6$}\\
\cmidrule(lr){4-4}\cmidrule(lr){5-5}\cmidrule(lr){6-6}\cmidrule(lr){7-7}
1&1856-2011&1960&~0.0000&~0.0000&~0.0000&~0.0000\\
2&1856-1959&1892&0.0000~&0.0000~&0.0000~&0.0000~\\
3&1856-1891&---&0.1865~&0.2323~&0.3524~&0.4822~\\
4&1892-1959&---&0.9522~&0.9690~&0.9256~&0.6561~\\
5&1960-2011&1996&0.0000~&0.0000~&0.0000~&0.0000~\\
6&1960-1995&1967&0.0013~&0.0011~&0.0025~&0.0017~\\
7&1960-1966&---&0.9568~&0.9549~&0.9818~&0.9935~\\
8&1967-1995&---&0.2927~&0.4305~&0.1786~&0.1348~\\
9&1996-2011&---&0.4285~&0.5345~&0.6413~&0.7365~\\\midrule
It.& Segment & Estimated& \multicolumn{4}{c}{P-value}\\
\cmidrule(lr){4-7}
&&change--point&\multicolumn{1}{c}{$d = 7$}&\multicolumn{1}{c}{$d = 8$}&\multicolumn{1}{c}{$d = 9$}&\multicolumn{1}{c}{$d = 10$}\\
\cmidrule(lr){4-4}\cmidrule(lr){5-5}\cmidrule(lr){6-6}\cmidrule(lr){7-7}
1&1856-2011&1960&~0.0000&~0.0000&~0.0000&~0.0000\\
2&1856-1959&1892&0.0000~&0.0000~&0.0000~&0.0000~\\
3&1856-1891&---&0.4235~&0.4325~&0.4901~&0.5667~\\
4&1892-1959&---&0.4646~&0.4348~&0.4696~&0.5068~\\
5&1960-2011&1996&0.0000~&0.0000~&0.0000~&0.0000~\\
6&1960-1995&1967&0.0026~&0.0038~&0.0058~&0.0067~\\
7&1960-1966&---&0.9992~&\multicolumn{1}{c}{---}&\multicolumn{1}{c}{---}&\multicolumn{1}{c}{---}\\
8&1967-1995&---&0.1245~&0.0690~&0.0571~&0.0586~\\
9&1996-2011&---&0.8243~&0.9118~&0.9618~&0.9779~\\
\bottomrule
\end{tabular}
\end{center}
}
\medskip

\caption{Segmentation procedure of the data into periods
with constant mean function}
\label{tab:4}
\end{table}

\clearpage

\appendix

\section{\bf Proof of Theorem~\ref{approx}
} \label{s:p-approx}

We start with some elementary properties of the projections
$\xi_{i,j}$. Let $|\cdot |$ denote the Euclidean norm of vectors.
\begin{lemma}\label{l-2.1} If Assumptions \ref{as-1}, \ref{as-5}
and \ref{as-6} hold, then
\beq\label{eq-2.1}
E\bxi_1={\bf 0},
\eeq
\beq\label{eq-2.2}
E\bxi_1\bxi_1^T={\bf I}_d,
\eeq
where ${\bf I}_d$ is the $d\times d$ identity matrix. Moreover,
\beq\label{eq-2.3}
E|\bxi_1|^3\leq E\norm Z_1\norm^3
\left(\sum_{j=1}^d 1/\lambda_j\right)^{3/2}
\eeq
and for all  $1\leq j \leq d$
\beq\label{eq-2.3a}
E|\xi_{1,j}|^3\leq E\norm Z_1\norm^3 /\lambda_j^{3/2}.
\eeq
\end{lemma}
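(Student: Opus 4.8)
The plan is to establish all four statements directly from the definition $\xi_{1,j}=\lambda_j^{-1/2}\la Z_1,v_j\ra$, using only the mean--zero and covariance structure of $Z_1$, the eigenvalue equation \eqref{eq-eig}, the orthonormality $\la v_j,v_k\ra=\delta_{jk}$, and the Cauchy--Schwarz inequality. The two moment identities \eqref{eq-2.1}--\eqref{eq-2.2} will come from interchanging expectation with integration over $[0,1]$ (Fubini), while the two moment bounds \eqref{eq-2.3}--\eqref{eq-2.3a} will come from pathwise (i.e.\ $\omega$--by--$\omega$) Cauchy--Schwarz estimates followed by taking expectations.

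For \eqref{eq-2.1} I would write $E\xi_{1,j}=\lambda_j^{-1/2}\int E[Z_1(t)]v_j(t)\,dt=0$, the interchange being justified by $E\int|Z_1(t)v_j(t)|\,dt\le (E\norm Z_1\norm^2)^{1/2}<\infty$ together with $EZ_1\equiv 0$. For \eqref{eq-2.2}, the $(j,k)$ entry of $E\bxi_1\bxi_1^T$ is $(\lambda_j\lambda_k)^{-1/2}E[\la Z_1,v_j\ra\la Z_1,v_k\ra]$; moving the expectation inside the double integral (again by Fubini, with integrability controlled by Cauchy--Schwarz and $E\norm Z_1\norm^2<\infty$) turns the cross term into $\int\!\!\int {\mathfrak c}(t,s)v_j(t)v_k(s)\,dt\,ds$. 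Applying the eigenvalue equation \eqref{eq-eig} to $v_k$ and then orthonormality gives $\lambda_k\la v_j,v_k\ra=\lambda_k\delta_{jk}$, so the entry equals $\delta_{jk}$, i.e.\ $E\bxi_1\bxi_1^T={\bf I}_d$.

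For the third--moment bounds I would argue before taking expectations. By Cauchy--Schwarz and $\norm v_j\norm=1$ one has $|\la Z_1,v_j\ra|\le\norm Z_1\norm$ pathwise, hence $|\xi_{1,j}|\le\lambda_j^{-1/2}\norm Z_1\norm$; cubing and taking expectations yields \eqref{eq-2.3a}. For \eqref{eq-2.3}, the same pointwise inequality gives $|\bxi_1|^2=\sum_{j=1}^d\lambda_j^{-1}\la Z_1,v_j\ra^2\le\norm Z_1\norm^2\sum_{j=1}^d 1/\lambda_j$, so $|\bxi_1|\le\norm Z_1\norm\bigl(\sum_{j=1}^d 1/\lambda_j\bigr)^{1/2}$; raising to the third power and taking expectations gives the claim, with finiteness guaranteed by Assumption~\ref{as-6}.

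None of the steps is genuinely difficult; the only points requiring care are the Fubini interchanges in \eqref{eq-2.1}--\eqref{eq-2.2}, whose integrability hypotheses I would verify via Cauchy--Schwarz and the second--moment part of Assumption~\ref{as-2}. For \eqref{eq-2.3} it is worth noting that one should \emph{not} attempt to invoke Bessel's inequality: the weights $\lambda_j^{-1}$ prevent the bound $\sum_j\la Z_1,v_j\ra^2\le\norm Z_1\norm^2$ from being useful, and the crude term--by--term Cauchy--Schwarz estimate is exactly what produces the stated factor $\bigl(\sum_{j=1}^d 1/\lambda_j\bigr)^{3/2}$. Assumption~\ref{as-5} enters only through ensuring that every $\lambda_j>0$, so that the normalizations $\lambda_j^{-1/2}$ are well defined.
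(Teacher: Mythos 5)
Your proposal is correct and follows essentially the same route as the paper's proof: expectation--integral interchange combined with the eigenvalue equation \eqref{eq-eig} and orthonormality for \eqref{eq-2.1}--\eqref{eq-2.2}, and the pathwise Cauchy--Schwarz bound $|\la Z_1,v_j\ra|\leq \norm Z_1\norm$ (applied termwise for \eqref{eq-2.3} and directly for \eqref{eq-2.3a}) before taking expectations. Your added justifications of the Fubini steps and the remark that Bessel's inequality is useless here are sound refinements of, not departures from, the paper's argument.
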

\begin{proof}
Since $EZ_1(t)=0$, the relation in (\ref{eq-2.1}) is obvious.
 The orthonormal functions $v_k$ and $v_\ell$ satisfy (\ref{eq-eig}),
so we get
\begin{equation*}
E\xi_{i,k}\xi_{i,\ell}=\frac{1}{(\lambda_k\lambda_\ell)^{1/2}}
\intt {\mathfrak c}(t,s)v_k(s)v_\ell(s)dtds=
\left\{
\begin{array}{ll}
0,\;\;&\mbox{if}\;\;k\neq \ell
\vspace{.3 cm}\\
1,\;\;&\mbox{if}\;\;k= \ell,
\end{array}
\right.
\end{equation*}
proving (\ref{eq-2.2}).  Using  the definition of the Euclidean norm
and the Cauchy--Schwarz inequality we conclude
$$
|\bxi_1|^3=\left(\sum_{j=1}^d\la Z_1, v_j\ra^2/\lambda_j\right)^{3/2}\leq \left(\sum_{j=1}^d
\norm Z_1\norm^2\norm v_j\norm^2/\lambda_j\right)^{3/2}=\norm Z_1\norm^3\left(\sum_{j=1}^d1/\lambda_j\right)^{3/2},
$$
since $\norm v_j\norm=1$. Taking the expected value of the equation above we obtain (\ref{eq-2.3}). Clearly,
$$
E|\xi_{1,j}|^3=\lambda_j^{-3/2}E|\la Z_1, v_j\ra|^3\leq\lambda_j^{-3/2}E\norm Z_1\norm^3.
$$
\end{proof}

The next lemma plays a central role in the proof of Theorem~\ref{approx}.

\begin{lemma}\label{sena} If Assumptions \ref{as-1}, \ref{as-5} and \ref{as-6}  hold, then for all $n$ we can define  independent identically distributed standard normal vectors $\bgamma_1, \ldots, \bgamma_n$ in $R^d$ such that
$$
P\left\{ \left|\sum_{i=1}^n\bxi_i -\sum_{i=1}^n\bgamma_i\right|\geq cn^{3/8}d^{1/4}(E|\bxi_1|^3+E|\bgamma_1|^3)^{1/4}\right\}
\leq cn^{-1/8}d^{1/4}(E|\bxi_1|^3+E|\bgamma_1|^3)^{1/4},
$$
where $c$ is an absolute constant.
\end{lemma}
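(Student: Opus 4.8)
The plan is to deduce the lemma from a single Yurinskii-type coupling estimate carrying a free scale parameter, and then to optimize that parameter. Write $S_n=\sum_{i=1}^n\bxi_i$ and $T_n=\sum_{i=1}^n\bgamma_i$, where the $\bgamma_i$ are i.i.d.\ $N({\bf 0},{\bf I}_d)$. By Lemma~\ref{l-2.1} the $\bxi_i$ are i.i.d., centered, with covariance ${\bf I}_d$, so $T_n$ has exactly the covariance of $S_n$. The target is the coupling bound
\[
P\bigl\{|S_n-T_n|>3\delta\bigr\}\le C_0\,\frac{\beta\,d}{\delta^{3}},
\qquad \beta:=\sum_{i=1}^n\bigl(E|\bxi_i|^3+E|\bgamma_i|^3\bigr)=nB,
\]
valid for every $\delta>0$, where $B=E|\bxi_1|^3+E|\bgamma_1|^3$ and $C_0$ is absolute. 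Granting this, the lemma is bookkeeping: take $\delta=n^{3/8}d^{1/4}B^{1/4}$, so $3\delta=3n^{3/8}d^{1/4}B^{1/4}$ and
\[
C_0\,\frac{\beta d}{\delta^3}=C_0\,\frac{nB\,d}{n^{9/8}d^{3/4}B^{3/4}}=C_0\,n^{-1/8}d^{1/4}B^{1/4},
\]
which is precisely the asserted probability, with $c=\max(3,C_0)$.

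The substance is therefore the displayed coupling estimate, which I would establish via the Prokhorov--L\'evy distance rather than by quoting the classical Yurinskii coupling inequality: only a third moment is available (Assumption~\ref{as-6}), and the classical bound moreover carries a factor $1+|\log(\delta^3/\beta)|/d$ that would spoil the clean linear dependence on $d$ needed in Theorem~\ref{approx}. I would run a Lindeberg replacement through the hybrid sums $H_k=\bgamma_1+\dots+\bgamma_k+\bxi_{k+1}+\dots+\bxi_n$, estimating at each step the effect of exchanging $\bxi_k$ for $\bgamma_k$ against test functions smoothed at a scale $\sigma$. A third--order Taylor expansion bounds the $k$-th contribution by $C\,(E|\bxi_k|^3+E|\bgamma_k|^3)\,\sigma^{-3}$, the symmetric appearance of both third moments being exactly why $E|\bgamma_1|^3$ enters $\beta$; summing over $k$ controls the smoothed Prokhorov--L\'evy distance between the laws of $S_n$ and $T_n$ by $C\beta\sigma^{-3}$. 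A single application of the Strassen--Dudley theorem then realizes $S_n$ and $T_n$ on one probability space, and with the smoothing scale $\sigma$ and the conversion calibrated to the target resolution $\delta$ one arrives at the displayed bound with weight $\beta d/\delta^3$.

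The main obstacle is producing the coupling estimate with a dimensional weight that is exactly linear in $d$, from third moments, and free of logarithms. Two points are delicate. First, the multivariate smoothing inequality must be localized so that each replacement costs only the third moments $E|\bxi_k|^3+E|\bgamma_k|^3$ rather than a fourth moment; this is where the infinite--dimensional, Banach--space nature of the argument is felt and where Assumption~\ref{as-6} substitutes for a fourth--moment hypothesis. Second---and this is the crux---the passage from the smoothed, half--space--type estimate to a genuine coupling in $\mathbb R^d$, together with the removal of the smoothing, must be arranged so that the power of $d$ does not exceed one and no $\log$ survives; it is precisely this balance between the smoothing scale, the removed noise, and the accumulated replacement error that distinguishes the Prokhorov--L\'evy argument from the classical one and makes it the heart of the proof. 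Granting the single--step bound with the correct dimensional weight, the telescoping over the $n$ steps and the Strassen--Dudley realization are routine.
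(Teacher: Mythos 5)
Your reduction is sound, and your overall architecture is in fact precisely the paper's: the published proof consists of two citations, namely Theorem 6.4.1 on p.~207 of \citet{senatov:1998}, which supplies the dimension--explicit Prokhorov--L\'evy distance bound between the law of the normalized sum and the standard Gaussian under third moments only, and the corollary to Theorem 11 of \citet{strassen:1965}, which converts that distance bound into a coupling. Your $\delta$--optimization is also arithmetically correct: after dividing by $\sqrt{n}$, the threshold and the probability in the lemma coincide at $c\,n^{-1/8}d^{1/4}(E|\bxi_1|^3+E|\bgamma_1|^3)^{1/4}$, which is exactly the signature of ``Prokhorov distance $\leq \varepsilon$ plus Strassen realization at resolution $\varepsilon$.''

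The genuine gap is that the one estimate doing all the work, $P\{|S_n-T_n|>3\delta\}\leq C_0\,\beta d/\delta^3$ (equivalently, the corresponding Prokhorov--L\'evy bound), is asserted rather than proved, and your own closing sentence concedes this (``granting the single--step bound\ldots''). The Lindeberg--replacement sketch only controls differences $|E f(H_k)-E f(H_{k-1})|$ for test functions smoothed at scale $\sigma$, at cost $C(E|\bxi_k|^3+E|\bgamma_k|^3)\sigma^{-3}$; that part is standard. What is not standard, and what you defer with the phrase ``the conversion calibrated to the target resolution $\delta$,'' is the de--smoothing: passing from smoothed test functions to indicators of arbitrary Borel sets at resolution $\delta$ requires controlling the Gaussian measure of $\delta$--enlargements (an anticoncentration/isoperimetric input), and it is exactly this step that produces the extra dimensional factors and the logarithm in the classical Yurinskii coupling you are trying to avoid. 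A log--free coupling with weight exactly $\beta d/\delta^3$ under third moments alone is not a known off--the--shelf fact, and carrying out the balance you describe would amount to reproving Senatov's Theorem 6.4.1 from scratch. As written, the proposal therefore reduces the lemma to an unproven estimate whose proof \emph{is} the content of the lemma; to complete the argument you should either execute the de--smoothing quantitatively (tracking the $d$--dependence through the Gaussian neighbourhood estimates) or do what the paper does and quote \citet{senatov:1998} together with \citet{strassen:1965}.
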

\begin{proof}
The result is a consequence of Theorem 6.4.1 on p.\ 207 of
\citet{senatov:1998} and the corollary to Theorem 11
 in \citet{strassen:1965}.
\end{proof}
We note that
\beq\label{eq-2.4}
(E|\bxi_1|^3+E|\bgamma_1|^3)^{1/4}\leq  (E|\bxi_1|^3)^{1/4}+(E|\bgamma_1|^3)^{1/4}.
\eeq
Also, since $|\bgamma_1|^2$ is the sum of the squares of $d$
independent standard normal random variables, Minkowski's
inequality implies
\beq\label{eq-2.5}
E|\bgamma_1|^3\leq c_1d^{3/2},
\eeq
with some constant $c_1$, and clearly
\beq\label{eq-2.6}
d^{3/2}\leq \lambda_1^{3/2}\left(\sum_{\ell=1}^d1/\lambda_\ell\right)^{3/2}.
\eeq
Combining Lemma \ref{sena} with (\ref{eq-2.4})--(\ref{eq-2.6}),
 we conclude that
\begin{align}
\label{eq-2.7}
P\Biggl\{\left|\sum_{i=1}^n\bxi_i -\sum_{i=1}^n\bgamma_i\right|\geq c_2n^{3/8}d^{1/4}\left(\sum_{j=1}^d1/\lambda_j\right)^{3/8}\Biggr\}\leq c_2n^{-1/8}d^{1/4}\left(\sum_{j=1}^d1/\lambda_j\right)^{3/8},
\end{align}
where $c_2$ does not depend on $d$.\\
In the next lemma we provide an upper bound for the variance of
$\sum_i^n(\xi_{i,j}-\gamma_{i,j})$, where
$\bgamma_i=(\gamma_{i,1},\ldots, \gamma_{i,d})^T$
is defined in Lemma \ref{sena}.
\begin{lemma}\label{sena-2} If Assumptions \ref{as-1}, \ref{as-5}
and \ref{as-6}  hold, then
for any  $1\leq j \leq d$ we get
$$
E\left(\sum_{i=1}^n\xi_{i,j}-\sum_{i=1}^n\gamma_{i,j}\right)^2\leq c_3n^{23/24}\frac{1}{\lambda_j}\left(d^{1/4}\left(\sum_{\ell=1}^d 1/\lambda_\ell\right)^{3/8}\right)^{1/3},
$$
where $c_3$ does not depend on $d$.
\end{lemma}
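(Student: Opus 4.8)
Write $D_j=\sum_{i=1}^n\xi_{i,j}-\sum_{i=1}^n\gamma_{i,j}$, which is the $j$th coordinate of $\sum_{i=1}^n\bxi_i-\sum_{i=1}^n\bgamma_i$, and set
\[
T_n=c_2n^{3/8}d^{1/4}\Bigl(\sum_{\ell=1}^d1/\lambda_\ell\Bigr)^{3/8},
\qquad
p_n=c_2n^{-1/8}d^{1/4}\Bigl(\sum_{\ell=1}^d1/\lambda_\ell\Bigr)^{3/8},
\]
the deviation level and the probability bound supplied by \eqref{eq-2.7}. The plan is to split $ED_j^2$ over the ``good'' event $G=\{|\sum_{i=1}^n\bxi_i-\sum_{i=1}^n\bgamma_i|\le T_n\}$ and its complement. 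On $G$ one has $|D_j|\le|\sum_{i=1}^n\bxi_i-\sum_{i=1}^n\bgamma_i|\le T_n$, since a coordinate is dominated by the Euclidean norm, so $E[D_j^2\mathbf{1}_G]\le T_n^2$; on $G^c$, Hölder's inequality with exponents $3/2$ and $3$ gives $E[D_j^2\mathbf{1}_{G^c}]\le (E|D_j|^3)^{2/3}p_n^{1/3}$, where $P(G^c)\le p_n$ by \eqref{eq-2.7}.

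To control the $G^c$--term \emph{without} a fourth moment I would bound $E|D_j|^3$ by Rosenthal's inequality for sums of i.i.d.\ mean--zero variables: $E|\sum_{i=1}^n\xi_{i,j}|^3\le C(n^{3/2}+nE|\xi_{1,j}|^3)$, and \eqref{eq-2.3a} yields $E|\xi_{1,j}|^3\le E\norm Z_1\norm^3/\lambda_j^{3/2}$, while $\sum_{i=1}^n\gamma_{i,j}\sim N(0,n)$ contributes only $Cn^{3/2}$. Hence $E|D_j|^3\le C(n^{3/2}+n/\lambda_j^{3/2})$ and $(E|D_j|^3)^{2/3}\le C(n+n^{2/3}/\lambda_j)$. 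Since $p_n^{1/3}=Cn^{-1/24}d^{1/12}(\sum_{\ell=1}^d1/\lambda_\ell)^{1/8}$, the $G^c$--contribution is at most $Cn^{23/24}d^{1/12}(\sum_\ell1/\lambda_\ell)^{1/8}+Cn^{15/24}\lambda_j^{-1}d^{1/12}(\sum_\ell1/\lambda_\ell)^{1/8}$; bounding the first summand by $\lambda_j\le\lambda_1$ and the second by $n^{15/24}\le n^{23/24}$ shows it is at most $c_3$ times the asserted bound.

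The bulk term $T_n^2\asymp n^{3/4}d^{1/2}(\sum_\ell1/\lambda_\ell)^{3/4}$ is the main obstacle: the coupling controls only the full $R^d$--norm at the single scale $T_n$, so a priori one coordinate contributes $T_n^2$ in the bulk, and this does not match the per--coordinate target for every configuration. I would resolve this by a dichotomy. If $T_n^2\le n$, then $d^{1/2}(\sum_\ell1/\lambda_\ell)^{3/4}\le Cn^{1/4}$, and raising to the power $5/6$ gives $d^{5/12}(\sum_\ell1/\lambda_\ell)^{5/8}\le Cn^{5/24}$, which is exactly the inequality that absorbs $T_n^2$ into the target (again using $\lambda_j\le\lambda_1$). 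If instead $T_n^2>n$, I would drop the split and use the crude variance bound $ED_j^2\le 2\,\mathrm{Var}(\sum_{i=1}^n\xi_{i,j})+2\,\mathrm{Var}(\sum_{i=1}^n\gamma_{i,j})=4n$, valid because each score and each Gaussian coordinate has unit variance by \eqref{eq-2.2}; here $T_n^2>n$ forces $n^{1/24}\le Cd^{1/12}(\sum_\ell1/\lambda_\ell)^{1/8}$, so $4n$ is once more at most $c_3$ times the target. In every case the constants depend only on $c_2$, $\lambda_1$ and $E\norm Z_1\norm^3$, and never on $d$, which gives the claim.
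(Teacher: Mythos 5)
Your proof is correct, and its skeleton coincides with the paper's: truncate on the coupling event supplied by \eqref{eq-2.7}, apply H\"older with exponents $3/2$ and $3$ on the complement, and control the third moment via Rosenthal's inequality together with \eqref{eq-2.3a} (the paper merely works with the normalized variable $U_n(j)=n^{-1/2}\sum_{i=1}^n(\xi_{i,j}-\gamma_{i,j})$ and the level $r_n=T_n/n^{1/2}$, which is cosmetic; note $nr_n^2=T_n^2$). Where you genuinely depart is the bulk term. The paper disposes of it by asserting that ``in view of Assumption \ref{d-2}, $nr_n^2$ is smaller than the latter rates'' --- i.e.\ it invokes Assumption \ref{d-2}, which ties $d$ to the sample size $N$ and is \emph{not} among the hypotheses of Lemma \ref{sena-2} as stated (nor of Theorem \ref{approx}, in whose proof the lemma is used). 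Your dichotomy repairs this: when $T_n^2\le n$, the extracted inequality $d^{5/12}\bigl(\sum_{\ell=1}^{d}1/\lambda_\ell\bigr)^{5/8}\le Cn^{5/24}$ absorbs $T_n^2$ into the asserted bound after using $\lambda_j\le\lambda_1$; when $T_n^2>n$, the crude bound $ED_j^2\le 4n$, available from \eqref{eq-2.2} without any appeal to the coupling, is itself dominated by the target because $T_n^2>n$ forces $n^{1/24}\le Cd^{1/12}\bigl(\sum_{\ell=1}^{d}1/\lambda_\ell\bigr)^{1/8}$. Since the lemma is an expectation bound, there is no ``the inequality is trivially true when the right-hand side is large'' escape as there would be for the probability statement \eqref{main}, so your second case is genuinely needed to get the lemma unconditionally under Assumptions \ref{as-1}, \ref{as-5} and \ref{as-6} alone, exactly as stated. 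Your individual estimates all check out: the coordinatewise domination $|D_j|\le\bigl|\sum_{i=1}^n\bxi_i-\sum_{i=1}^n\bgamma_i\bigr|$, the Rosenthal bound $E|D_j|^3\le C\bigl(n^{3/2}+n\lambda_j^{-3/2}\bigr)$ with the Gaussian part contributing only $Cn^{3/2}$, and the exponent arithmetic giving $p_n^{1/3}\bigl(n+n^{2/3}\lambda_j^{-1}\bigr)\le Cn^{23/24}\lambda_j^{-1}d^{1/12}\bigl(\sum_{\ell=1}^{d}1/\lambda_\ell\bigr)^{1/8}$. In short: same decomposition and same key tools, but your self-contained treatment of the main term is cleaner and slightly stronger than the paper's, which is shorter only because it borrows Assumption \ref{d-2} from the later applications.
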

\begin{proof} Let
$$
U_n(j)=n^{-1/2}\sum_{i=1}^n(\xi_{i,j}-\gamma_{i,j})\;\;\;\mbox{and}\;\;\; r_n=c_2n^{-1/8}d^{1/4}\left( \sum_{\ell=1}^d 1/\lambda_\ell\right)^{3/8}.
$$
First we write
\begin{align*}
EU_n^2(j)&=E[U_n^2(j)I\{|U_n(j)|\leq r_n\}]+E[U_n^2(j)I\{|U_n(j)|> r_n\}]\\
&\leq r_n^2+\frac{2}{n}E\biggl[\biggl(\sum_{i=1}^n\xi_{i,j}\biggl)^2I\{|U_n(j)|>r_n\}\biggl]+\frac{2}{n}
E\biggl[\biggl(\sum_{i=1}^n
\gamma_{i,j}\biggl)^2I\{|U_n(j)|>r_n\}\biggl].
\end{align*}
Using H\"older's inequality we get that
\begin{align*}
E\biggl[\biggl(\sum_{i=1}^n\xi_{i,j}\biggl)^2I\{|U_n(j)|>r_n\}\biggl]&\leq E\biggl[\biggl|\sum_{i=1}^n\xi_{i,j}\biggl|^3\biggl]^{2/3}\biggl[  P\{|U_n(j)|>r_n\}\biggl]^{1/3}\\
&\leq E\biggl[\biggl|\sum_{i=1}^n\xi_{i,j}\biggl|^3\biggl]^{2/3}r_n^{1/3}
\end{align*}
by (\ref{eq-2.7}). Applying now Rosenthal's inequality
(cf.\ \citet{petrov:1995}, p.\ 59) we obtain
$$
E\biggl|\sum_{i=1}^n\xi_{i,j}\biggl|^3
\leq c_4\biggl\{ \sum_{i=1}^nE|\xi_{i,j}|^3 +
\biggl(\sum_{i=1}^nE\xi_{i,j}^2\biggl)^{3/2}\biggl\},
$$
where $c_4$ is an absolute constant. Hence
$$
E\biggl|\sum_{i=1}^n\xi_{i,j}\biggl|^3
\leq c_5\{n\lambda_j^{-3/2}+n^{3/2}\}\leq c_6(n/\lambda_j)^{3/2}
$$
and therefore
\begin{align*}
E\biggl[\biggl(\sum_{i=1}^n\xi_{i,j}\biggl)^2I\{|U_n(j)|>r_n\}\biggl]
&\leq c_7(n/\lambda_j)r_n^{1/3}\\
&\leq
c_8 n^{23/24}\frac{1}{\lambda_j}\biggl(d^{1/4}
\biggl(\sum_{\ell=1}^d1/\lambda_\ell\biggl)^{3/8}\biggl)^{1/3}.
\end{align*}
Following the previous arguments one can show that
$$
E\biggl[\biggl(\sum_{i=1}^n\gamma_{i,j}\biggl)^2I\{|U_n(j)|>r_n\}\biggl]\leq c_9 n^{23/24}\frac{1}{\lambda_j}\biggl(d^{1/4}
\biggl(\sum_{\ell=1}^d1/\lambda_\ell\biggl)^{3/8}\biggl)^{1/3}.
$$
The constants $c_8$ and $c_9$ do not depend on $d$. Since in view of Assumption \ref{d-2}, $nr_n^2$ is smaller than the latter rates,
this completes the proof of Lemma \ref{sena-2}.
\end{proof}

\begin{proof}[\bf Proof of Theorem~\ref{approx}]
  We use a blocking argument to construct a Wiener process which is
  close to the partial sums $\sum_{1\leq i \leq k}\xi_{i,j}, 1\leq k
  \leq N, 1\leq j \leq d$. Let $K$ be the length of the blocks to be
  chosen later. Let $M=\lfloor N/K\rfloor$. For $k=\ell M, 1\leq \ell
  \leq K$ we write
\[
\sum_{i=1}^k\xi_{i,j}=\sum_{v=1}^\ell
\biggl(\sum_{i=(v-1)M+1}^{vM}\xi_{i,j}\biggl).
\]
Using the $\gamma_{i,j}$'s, the independent standard normal random
variables constructed in Lemma \ref{sena},  we define
\beq\label{wien}
W_j(k)=\sum_{i=1}^k\gamma_{i,j},\;\;\;1\leq j \leq d,\; 1\leq k \leq N.
\eeq
By Lemma \ref{sena-2} we get for any
$0<\delta <1/2$ and $1\leq j \leq d$ via Kolmogorov's inequality
 (cf.\ \citet{petrov:1995}), p.\ 54)
\begin{align}\label{kol}
P\biggl\{&\max_{1\leq \ell \leq K}\biggl|
\sum_{i=1}^{\ell M}\xi_{i,j}-W_j(\ell M)\biggl|
\geq N^{1/2-\delta}\biggl\}\\
&= P\biggl\{\max_{1\leq \ell \leq K}\biggl|\sum_{v=1}^{\ell }\biggl(\sum_{i=(v-1)M+1}^{vM}(\xi_{i,j}-\gamma_{i,j})\biggl)\biggl|\geq N^{1/2-\delta}\biggl\} \notag \\
\notag
&\leq \frac{1}{N^{1-2\delta}}\sum_{v=1}^K E\biggl(\sum_{i=(v-1)M+1}^{vM}(\xi_{i,j}-\gamma_{i,j})\biggl)^2\\
&\leq \frac{c_3}{N^{1-2\delta}}KM^{23/24}  \frac{1}{\lambda_j} \biggl(d^{1/4}\biggl(\sum_{\ell=1}^d1/\lambda_\ell\biggl)^{3/8}\biggl)^{1/3}                                                              \notag\\
&\leq c_3 N^{2\delta-1/24}K^{1/24}
\frac{1}{\lambda_j} \biggl(d^{1/4}
\biggl(\sum_{\ell=1}^d1/\lambda_\ell\biggl)^{3/8}\biggl)^{1/3}.
\notag
\end{align}
One can define independent Wiener processes (standard Brownian
motions) $W_j(x), x\geq 0, 1\leq j \leq d$ such that (\ref{wien})
holds.
We obtained approximations for the partial sums of the $\xi_{i,j}$'s at the points $k=\ell M, 1\leq \ell\leq  K.$ Next we show that neither the partial sums of the $\xi_{i,j}$'s nor the Wiener processes $W_j(x)$ can oscillate too much between $\ell M$ and $(\ell +1)M$. \\
Using again Rosenthal's inequality (cf.\ \citet{petrov:1995}, p.\ 59)
we obtain for all $1\leq j \leq d$ that
\begin{align}\label{rose}
E\biggl|\sum_{i=1}^M\xi_{i,j}\biggl|^3
&\leq c_{10}\biggl\{ \sum_{i=1}^ME|\xi_{i,j}|^3 +
\biggl(\sum_{i=1}^ME\xi_{i,j}^2\biggl)^{3/2}\biggl\}\\
&\leq c_{11}\{M/\lambda_j^{3/2} +M^{3/2}\}    \notag\\
&\leq  c_{11} (1+\lambda_1^{3/2})(M/\lambda_j)^{3/2}\notag
\end{align}
on account of Lemma \ref{l-2.1}.
Combining the Marcinkiewicz--Zygmund inequality
 (cf.\ \cite{petrov:1995}, p.\ 82) with (\ref{rose}) we conclude
\beq\label{mz}
E\biggl(\max_{1\leq h \leq M}\biggl|
\sum_{i=1}^h\xi_{i,j}\biggl|\biggl)^3\leq c_{12}(M/\lambda_j)^{3/2}.
\eeq
Applying (\ref{mz}) we get
\begin{align}\label{max-1}
P\biggl\{&\max_{0\leq \ell \leq K+1}\max_{1\leq h \leq M}\biggl|\sum_{i=1}^{\ell M}\xi_{i,j}
-\sum_{i=1}^{\ell M+h}\xi_{i,j}\biggl|\geq N^{1/2-\delta}\biggl\}\\
&\leq (K+2) P\biggl\{\max_{1\leq h \leq M}\biggl|\sum_{i=1}^h\xi_{i,j}\biggl|>N^{1/2-\delta}\biggl\}
\notag\\
&\leq \frac{c_{13}}{N^{3/2-3\delta}}K(M/\lambda_j)^{3/2}\notag\\
&\leq c_{13}N^{3\delta}K^{-1/2}\lambda_j^{-3/2}. \notag
\end{align}
Lemma 1.2.1 of \cite{csorgo:revesz:1981} yields
\begin{align}\label{max-2}P\biggl\{\max_{0\leq \ell \leq K}
\sup_{|h|\leq M}|W_{j}(\ell M)-W_j(\ell M+h)|
\geq c_{14} M^{1/2}(\log N)^{1/2}\biggl\}\leq \frac{c_{15}}{N^2}.
\end{align}
Now choosing  $\delta =1/80$ and $K=\lfloor N^\beta \rfloor$ with $\beta =1/10$, it follows from (\ref{kol}), (\ref{max-1}) and (\ref{max-2}) for all $1\leq j \leq d$ that
\begin{align}\label{last}
P\biggl\{&\sup_{0\leq y \leq N}\biggl|\sum_{1\leq i \leq y}\xi_{i,j}-W_j(y)\biggl|>N^{1/2-\delta}\biggl\}\\
&\leq c_{15}N^{-\delta}\biggl\{\frac{1}{\lambda_j}\biggl(d^{1/4}\biggl(\sum_{\ell=1}^d1/\lambda_\ell\biggl)^{3/8}\biggl)^{1/3}+
\frac{1}{\lambda^{3/2}_j}\biggl\}.\notag
\end{align}
The result now follows from (\ref{last}) with $W_{j,N}(x)=N^{-1/2}W_j(Nx), 0\leq x \leq 1.$
\end{proof}

\section{\bf Proofs of the results of
Section~\ref{s:cp}} \label{s:p-cp}

We first  investigate the weak convergence of the process
\[
{Z}_N(u,x)=\frac{1}{d^{1/2}}\sum_{j=1}^{\lfloor du\rfloor}
\left\{ ({S}_{j,N}(x )-x{S}_{j,N}(1))^2-x(1-x)\right\},\;\;0\leq u, x \leq 1,
\]
with $S_{j,N}(x)$ given by \eqref{n-sum}.
The  difference between $\hat{Z}_N(u,x)$ and ${Z}_N(u,x)$ is that
$\hat{Z}_N$ is computed from the empirical projections $\hat{v}_1,
\ldots, \hat{v}_d$, while $Z_N$ is based on the unknown population
eigenfunctions $v_1, \ldots, v_d$.

\begin{theorem}\label{pure} If Assumptions \ref{as-1}, \ref{as-5},
\ref{as-6} and \ref{d-0}--\ref{d-3} hold, then
$$
{Z}_N(u,x)\;\;\to\;\;\Gamma(u,x)\;\;\mbox{in}\;\;{\mathcal D}[0,1]^2,
$$
where the Gaussian process $\Gamma(u,x)$ is defined in Theorem \ref{th-1}.
\end{theorem}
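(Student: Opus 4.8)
The plan is to prove Theorem~\ref{pure} by a two--step reduction: first replace the partial sums $S_{j,N}$ by the independent Wiener processes $W_{1,N},\ldots,W_{d,N}$ furnished by Theorem~\ref{approx}, and then establish the weak convergence of the resulting Gaussian functional. Following the proof of Theorem~\ref{approx}, each $W_{j,N}$ is a standard Brownian motion on $[0,1]$, so I put
\[
B_{j,N}(x)=W_{j,N}(x)-xW_{j,N}(1),\qquad \eta_j(x)=B_{j,N}^2(x)-x(1-x),
\]
where $B_{j,N}$ is a standard Brownian bridge, and define
\[
\tilde Z_N(u,x)=\frac{1}{d^{1/2}}\sum_{j=1}^{\lfloor du\rfloor}\eta_j(x).
\]
Since the $W_{j,N}$ are independent, so are the bridges $B_{1,N},\ldots,B_{d,N}$, and $EB_{j,N}^2(x)=x(1-x)$; hence $\tilde Z_N$ is a $d^{-1/2}$--normalized partial sum in $j$ of i.i.d.\ centered random elements $\eta_j$, which is precisely the structure a functional CLT requires.

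First I would show $\sup_{0\le u,x\le 1}|Z_N(u,x)-\tilde Z_N(u,x)|\to 0$ in probability. Writing $a_j(x)=S_{j,N}(x)-xS_{j,N}(1)$ and using $a_j^2-B_{j,N}^2=(a_j-B_{j,N})(a_j+B_{j,N})$, one bounds
\[
\sup_{u,x}|Z_N-\tilde Z_N|\le \Delta_N\cdot\frac{1}{d^{1/2}}\sum_{j=1}^d\sup_x\bigl(|a_j(x)|+|B_{j,N}(x)|+|a_j(x)-B_{j,N}(x)|\bigr),
\]
where $\Delta_N=\max_{j}\sup_x|a_j(x)-B_{j,N}(x)|\le 2\max_j\sup_x|S_{j,N}(x)-W_{j,N}(x)|$. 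By Theorem~\ref{approx}, together with Assumptions~\ref{d-2} and \ref{d-3} (which force the right--hand side of \eqref{main} to zero), $\Delta_N$ is of the order $N^{-1/80}$ with probability tending to one. Doob's maximal inequality gives $E\sup_x a_j^2(x)=O(1)$ and $E\sup_x B_{j,N}^2(x)=O(1)$ uniformly in $j$, so the averaged sum is $O_p(d^{1/2})$; hence $\sup_{u,x}|Z_N-\tilde Z_N|=O_p(d^{1/2}N^{-1/80})$, which vanishes by Assumption~\ref{d-1}.

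It then remains to prove $\tilde Z_N\to\Gamma$ in ${\mathcal D}[0,1]^2$. For the finite--dimensional distributions I would apply the central limit theorem to the i.i.d.\ summands $\eta_j$ (the Brownian bridge being Gaussian, all moments are finite): for $u\le v$ and $x\le y$, independence across $j$ and $E[\eta_1(x)\eta_1(y)]=\mathrm{Cov}(B^2(x),B^2(y))=2(\min(x,y)-xy)^2=2x^2(1-y)^2$ give
\[
E[\tilde Z_N(u,x)\tilde Z_N(v,y)]=\frac{\lfloor du\rfloor}{d}\,2x^2(1-y)^2\to 2ux^2(1-y)^2,
\]
matching the covariance of $\Gamma$ in Theorem~\ref{th-1}; the Lyapunov condition holds since $E|\eta_1(x)|^3<\infty$, and Assumption~\ref{d-0} guarantees $\lfloor du\rfloor\to\infty$.

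The main obstacle is tightness of $\tilde Z_N$ in ${\mathcal D}[0,1]^2$, which I would handle via the Bickel--Wichura multiparameter criterion: a moment bound on the increment of $\tilde Z_N$ around each planar rectangle, controlled through the product of increments over adjacent rectangles. In the $u$--direction the increment over $(u,u']$ is a $d^{-1/2}$--scaled sum of about $d(u'-u)$ independent mean--zero terms, so Rosenthal's inequality (exactly as in Appendix~\ref{s:p-approx}) bounds its fourth moment by $C(u'-u)^2$, the independence of the bridges across $j$ being what makes this block structure work. In the $x$--direction I would use $\eta_j(x)-\eta_j(x')=(B_{j,N}(x)-B_{j,N}(x'))(B_{j,N}(x)+B_{j,N}(x'))$ with $E(B_{j,N}(x)-B_{j,N}(x'))^2=O(|x-x'|)$, and then combine the two directions into the required rectangle moment bound uniformly in $N$. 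Delivering this joint, two--parameter moment estimate uniformly as $d\to\infty$---rather than the separate one--parameter bounds---is the delicate part; once it is in place, finite--dimensional convergence and tightness yield $\tilde Z_N\to\Gamma$, and the approximation of the first step transfers the conclusion to $Z_N$.
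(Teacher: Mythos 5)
Your proposal is correct, and its first step is in substance the paper's own: Lemma~\ref{bridge-approx} performs exactly your reduction from $Z_N$ to the bridge process, writing $V_{j,N}^2-B_{j,N}^2=(V_{j,N}-B_{j,N})^2+2B_{j,N}(V_{j,N}-B_{j,N})$ and controlling $\max_j\sup_x|B_{j,N}(x)|$ by the Gaussian tail bound $4(\log N)^{1/2}$ with probability $1-O(dN^{-2})$, where you instead use Doob $L^2$ bounds on $\sup_x|a_j|$ and $\sup_x|B_{j,N}|$; both give $\sup_{u,x}|Z_N-\tilde Z_N|=O_P\bigl((d\log N)^{1/2}N^{-1/80}\bigr)=o_P(1)$ under Assumptions \ref{d-1}--\ref{d-3}. (Your reading of Theorem~\ref{approx} with threshold $N^{-1/80}$ rather than the displayed $N^{1/2-1/80}$ is the intended one --- $S_{j,N}$ is already normalized by $N^{-1/2}$ --- and agrees with how the paper itself uses the theorem in Lemma~\ref{bridge-approx}.) Where you genuinely diverge is the Gaussian limit: the paper does not prove tightness by hand but invokes Theorem 2 of Hahn (1978), a ready-made CLT in ${\mathcal D}[0,1]$ for i.i.d.\ random elements, verifying its two increment conditions, $E(V(t)-V(s))^2\le c\,|t-s|\log(1/|t-s|)$ and $E[(V(t)-V(z))^2(V(z)-V(s))^2]\le c\,(|t-s|\log(1/|t-s|))^2$, via Garsia's modulus-of-continuity estimate for the bridge (this is the content of Lemma~\ref{hahn}). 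Your Lyapunov-plus-Bickel--Wichura route is more elementary and avoids Garsia entirely, since direct Gaussian computations give $E(\eta_1(x)-\eta_1(y))^2\le C|x-y|$ and $E|\eta_1(x)-\eta_1(y)|^4\le C|x-y|^2$; moreover, the ``delicate combination'' you flag is in fact a single step, because the increment of $\tilde Z_N$ around a rectangle $(u,u']\times(x,x']$ is itself the normalized i.i.d.\ sum $d^{-1/2}\sum_{j=\lfloor du\rfloor+1}^{\lfloor du'\rfloor}\bigl(\eta_j(x')-\eta_j(x)\bigr)$, so one application of Rosenthal's inequality yields $E|\Delta|^4\le C(u'-u)^2(x'-x)^2$, a single-block bound that implies the product form by Cauchy--Schwarz. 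The one point requiring care is the jump structure in $u$: that moment bound is valid only for $u,u'$ on the $1/d$-lattice (for $u'-u<1/d$ a block contains at most one summand and the exponent on $u'-u$ degrades), so you must invoke the lattice version of the Bickel--Wichura criterion for processes constant between grid points, noting also that $\tilde Z_N$ vanishes on the boundary $u=0$, $x\in\{0,1\}$; with that, your finite-dimensional computation (whose covariance $2ux^2(1-y)^2$ matches $\Gamma$ in Theorem~\ref{th-1}) completes the proof. In short, the paper's citation of Hahn buys brevity; your argument buys self-containedness at the cost of handling the lattice tightness criterion explicitly.
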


To  prove  Theorem \ref{pure}, we need several lemmas and some
additional notation.

Let
$$
V_{j,N}(x)=S_{j,N}(x)-xS_{j,N}(1)\;\;\;\mbox{and}\;\;\;
B_{j,N}(x)=W_{j,N}(x)-xW_{j,N}(1),
$$
where $S_{j,N}$ is defined in (\ref{n-sum}) and the $W_{j,N}$'s are
the Wiener processes of Theorem \ref{approx}.
It follows from the definition that for each $N$ the processes
$B_{j,N}, 1\leq j \leq d,$ are independent Brownian bridges.
\begin{lemma}\label{bridge-approx} If Assumptions \ref{as-1},
\ref{as-5} and \ref{as-6}  hold, then
\begin{align*}
P\biggl\{\sup_{0\leq x \leq 1}\sum_{j=1}^d &\bigl|V^2_{j,N}(x)-B_{j,N}^2(x)\bigl|\geq    20dN^{-1/80}(\log N)^{1/2}\biggl\}\\
&\leq c_*N^{-1/80}\biggl\{d^{1/12}\biggl(\sum_{\ell=1}^d1/\lambda_\ell\biggl)^{1/8}+\sum_{j=1}^d1/\lambda^{3/2}_j
\biggl\}+c_{**}dN^{-2},
\end{align*}
where $c_*$ and $c_{**}$ only depend on $\lambda_1$ and $E\norm Z_1\norm^3.$
\end{lemma}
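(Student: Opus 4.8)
The plan is to exploit the elementary factorization $V_{j,N}^2(x)-B_{j,N}^2(x) = \left(V_{j,N}(x)-B_{j,N}(x)\right)\left(V_{j,N}(x)+B_{j,N}(x)\right)$ together with the splitting $V_{j,N}+B_{j,N} = 2B_{j,N}+(V_{j,N}-B_{j,N})$. The first factor is small because it is controlled by the strong approximation of Theorem~\ref{approx}, while the magnitude $|B_{j,N}|$ in the second factor is only of moderate (logarithmic) size, uniformly in $x$ and over the $d$ bridges. Each summand is therefore a product of a term of order $N^{-1/80}$ and a term of order $(\log N)^{1/2}$, and summing the $d$ of them produces exactly the claimed rate $dN^{-1/80}(\log N)^{1/2}$.

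Concretely, I would work on the intersection of two good events. Let $A$ be the complement of the exceptional event of Theorem~\ref{approx}, so that on $A$ one has $\max_{1\le j\le d}\sup_{0\le x\le 1}|S_{j,N}(x)-W_{j,N}(x)| < N^{-1/80}$ (the approximation error for the normalized partial sums), with $P(A^c)$ bounded by the right-hand side of \eqref{main}. Let $C$ be the event $\max_{1\le j\le d}\sup_{0\le x\le 1}|B_{j,N}(x)| \le c_{14}(\log N)^{1/2}$. Since, for each $N$, the $B_{1,N},\dots,B_{d,N}$ are independent standard Brownian bridges, the classical exponential bound $P(\sup_{0\le x\le 1}|B(x)|>\lambda)\le 2e^{-2\lambda^2}$ combined with a union bound over $j$ gives $P(C^c)\le 2dN^{-2c_{14}^2}$, which is at most $c_{**}dN^{-2}$ once $c_{14}\ge 1$.

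On $A\cap C$ the estimate becomes deterministic. From $V_{j,N}(x)-B_{j,N}(x) = \{S_{j,N}(x)-W_{j,N}(x)\}-x\{S_{j,N}(1)-W_{j,N}(1)\}$ and the triangle inequality, $\sup_{0\le x\le 1}|V_{j,N}(x)-B_{j,N}(x)|\le 2N^{-1/80}$ on $A$. Hence, uniformly in $x$ and $j$,
\[
|V_{j,N}^2(x)-B_{j,N}^2(x)| \le 2N^{-1/80}\left(2c_{14}(\log N)^{1/2}+2N^{-1/80}\right).
\]
Summing over $1\le j\le d$ and taking the supremum over $x$, the leading contribution is $4c_{14}\,dN^{-1/80}(\log N)^{1/2}$ while the remainder $4dN^{-2/80}$ is of smaller order; using $(\log N)^{1/2}\ge 1$ and choosing $c_{14}$ appropriately (large enough for the tail bound, small enough that $4c_{14}$ leaves room to absorb the remainder), the whole expression is at most $20\,dN^{-1/80}(\log N)^{1/2}$ for all large $N$. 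The lemma then follows by bounding the probability of the complementary event by $P(A^c)+P(C^c)$.

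The only genuinely delicate point is the uniform control of the $d$ Brownian-bridge suprema: one must check that the logarithmic scale $(\log N)^{1/2}$ is simultaneously large enough for the union-bound cost to collapse to the stated $c_{**}dN^{-2}$ and small enough that, after multiplication by the approximation error $N^{-1/80}$, the target rate is met. The factorization together with the triangle inequality for $V_{j,N}-B_{j,N}$ is precisely what lets the strong-approximation rate of Theorem~\ref{approx} transfer from the partial sums to the squared CUSUM functionals; everything else reduces to collecting constants.
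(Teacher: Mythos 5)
Your proof is correct and is essentially the paper's own argument: the paper uses the identical decomposition $V^2_{j,N}(x)-B_{j,N}^2(x)=(V_{j,N}(x)-B_{j,N}(x))^2+2B_{j,N}(x)(V_{j,N}(x)-B_{j,N}(x))$, the same supremum tail bound for the $d$ independent Brownian bridges (with threshold $4(\log N)^{1/2}$, yielding the $c_{**}dN^{-2}$ term), and then concludes directly from Theorem~\ref{approx} exactly as you do on the intersection of the two good events. You also correctly read the approximation threshold in Theorem~\ref{approx} as $N^{-1/80}$ on the normalized scale of $S_{j,N}$ and $W_{j,N}$ (the stated $N^{1/2-1/80}$ is evidently a typo carried over from the unnormalized sums in the proof), which is precisely what the bound $20dN^{-1/80}(\log N)^{1/2}$ requires.
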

\begin{proof} First we write
$$
V^2_{j,N}(x)-B_{j,N}^2(x)=(V_{j,N}(x)-B_{j,N}(x))^2
+2B_{j,N}(x)(V_{j,N}(x)-B_{j,N}(x)).
$$
Since the $B_{j,N}$'s are Brownian bridges, the distribution of the
supremum functional of the Brownian bridge
 (cf.\ \cite{csorgo:revesz:1981}) gives
\[
 P\biggl\{\max_{1\leq j \leq d}\sup_{0\leq
x \leq 1}|B_{j,N}(x)|\geq 4(\log N)^{1/2}\biggl\}\leq
c_{**}\frac{d}{N^2},
\]
where $c_{**}$ is an absolute constant.  Now
the result follows immediately from Theorem \ref{approx}.
\end{proof}

Now we prove the weak convergence of the partial sums of the squares
of independent Brownian bridges. Let $B_1, B_2, \ldots, B_d$ be
independent Brownian bridges.
\begin{lemma}\label{hahn} As $d\to \infty$, we have that
$$
\frac{1}{d^{1/2}}\sum_{j=1}^{\lfloor du\rfloor}
\bigl(B_j^2(x)-x(1-x)\bigl)\;\to \;\Gamma(u,x)
\quad\mbox{in}\;\;{\mathcal D}[0,1]^2,
$$
where the Gaussian process $\Gamma(u,x)$ is defined in Theorem \ref{th-1}.
\end{lemma}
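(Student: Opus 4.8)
The plan is to establish weak convergence in $\mathcal D[0,1]^2$ by the standard two-step route: first verify convergence of the finite--dimensional distributions to those of $\Gamma$, and then prove tightness. Write $\eta_j(x)=B_j^2(x)-x(1-x)$, so that for fixed $x$ the summands $\eta_j(x)$, $j\ge 1$, are independent and identically distributed with mean zero, and the process in the statement is $G_d(u,x)=d^{-1/2}\sum_{j=1}^{\lfloor du\rfloor}\eta_j(x)$.

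For the finite--dimensional distributions, I would fix points $(u_1,x_1),\ldots,(u_m,x_m)$ and apply a multivariate central limit theorem to the triangular array of mean--zero i.i.d.\ random vectors $\bigl(\eta_j(x_1),\ldots,\eta_j(x_m)\bigr)^T$. Because a single Brownian bridge has all moments finite and $\sup_{0\le x\le1}E B_j^4(x)<\infty$, the Lindeberg condition holds trivially, so the partial sums $d^{-1/2}\sum_{j=1}^{\lfloor du\rfloor}\eta_j(x_k)$ converge jointly to a mean--zero Gaussian vector. The covariance structure is then a direct computation: using the well--known formula $EB(x)B(y)=\min(x,y)-xy$ together with the Gaussian product (Isserlis/Wick) identity to evaluate $E\bigl[(B^2(x)-x(1-x))(B^2(y)-y(1-y))\bigr]=2(\min(x,y)-xy)^2$, and the fact that the floor $\lfloor du\rfloor/d\to u$ contributes the factor $u\wedge v$, one recovers exactly $E[\Gamma(u,x)\Gamma(v,y)]=2\,(u\wedge v)\,(\min(x,y)-xy)^2$, which for $0\le u\le v$ and $0\le x\le y$ equals $2ux^2(1-y)^2$, matching the covariance in Theorem \ref{th-1}. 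This pins down the limit as $\Gamma$.

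For tightness on the two--parameter space $\mathcal D[0,1]^2$, I would control increments in both the $u$ and $x$ directions. Increments in $u$ are governed by how many bridges fall between two levels: for $u<v$, $G_d(v,x)-G_d(u,x)=d^{-1/2}\sum_{j=\lfloor du\rfloor+1}^{\lfloor dv\rfloor}\eta_j(x)$ is a sum of roughly $d(v-u)$ independent centered terms, so a fourth--moment (Rosenthal--type) bound yields $E\bigl[(G_d(v,x)-G_d(u,x))^2\cdot(\text{companion increment})\bigr]\le C\,(v-u)^2$ uniformly in $d$, the product form needed for tightness of a partial--sum--indexed field. Increments in $x$ are handled by the modulus of continuity of $x\mapsto B_j^2(x)-x(1-x)$: using $E\bigl[(\eta_j(x)-\eta_j(y))^2\bigr]\le C|x-y|$ and the independence across $j$ one again obtains a moment bound of the correct order in $|x-y|$. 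Combining the two directions gives a block--moment inequality of the product type that implies tightness of the two--parameter field and continuity of the limit in the $x$ variable.

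The main obstacle I anticipate is the tightness argument in the $u$--direction, because $u\mapsto G_d(\cdot,x)$ is a pure jump (partial--sum) process whose limit $\Gamma$ is, along $u$, a Brownian--motion--like object that is continuous but not of bounded variation; one must therefore use a genuine $\mathcal D[0,1]$--tightness criterion (an Aldous--type or a fourth--moment/Bickel--Wichura criterion for the field) rather than a continuity modulus, and verify the requisite product moment condition $E\bigl[|G_d(\text{block}_1)|^2\,|G_d(\text{block}_2)|^2\bigr]\le C(\text{area})^2$ over disjoint rectangles. The finite third moment of $\eta_j$ is more than enough for the Rosenthal bounds, so the bookkeeping is routine once the correct criterion is invoked; the delicate point is simply ensuring the exponent on the block size is strictly above $1$ so the field is tight and the limit law is characterized uniquely as $\Gamma$.
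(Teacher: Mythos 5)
Your proposal is correct in substance but follows a genuinely different route from the paper. The paper does not separate finite--dimensional convergence from tightness at all: it invokes Theorem 2 of \citet{hahn:1978}, a ready--made central limit theorem in ${\mathcal D}[0,1]$ for partial sums of i.i.d.\ processes, and reduces the lemma to verifying Hahn's two moment conditions for $V(t)=B^2(t)-t(1-t)$, namely $E(V(t)-V(s))^2\le c\,|t-s|\log(1/|t-s|)$ and $E[(V(t)-V(z))^2(V(z)-V(s))^2]\le c\,(|t-s|\log(1/|t-s|))^2$; these are obtained pathwise from Garsia's lemma \citep{garsia:1970}, which gives $|B(t)-B(s)|\le\theta_2(|t-s|\log(1/|t-s|))^{1/2}$ with $\theta_2$ having all moments, whence $|V(t)-V(s)|\le 2\theta_1\theta_2(|t-s|\log(1/|t-s|))^{1/2}+|t-s|$. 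Your plan --- Lindeberg CLT for the finite--dimensional laws (your Wick computation of the covariance $2(u\wedge v)(\min(x,y)-xy)^2$ agrees with the paper's $2ux^2(1-y)^2$) plus a Bickel--Wichura product--moment criterion for the two--parameter field --- is a valid, more self--contained alternative, and you correctly flagged the one delicate point: since $u\mapsto G_d(u,x)$ is a pure--jump partial--sum process, the rectangle bound $E|G_d(R)|^4\le C\mu(R)^2$ fails for rectangles thinner than $1/d$ in $u$ (the Rosenthal term $nE\xi^4/d^2$ with $n=1$ dominates); the standard repair is to verify the criterion only for blocks whose $u$--corners lie on the grid $\{j/d\}$, where $n\le 2d(v-u)$, combined with $E(\eta_j(x)-\eta_j(y))^4\le C|x-y|^2$ (via Cauchy--Schwarz from $B^2(x)-B^2(y)=(B(x)-B(y))(B(x)+B(y))$), which restores the exponent $2>1$, while independence across disjoint $u$--blocks supplies the product form for neighboring blocks. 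What the paper's route buys is brevity --- one citation replaces the entire tightness bookkeeping --- at the cost of resting on Hahn's theorem; your route buys transparency and even avoids the logarithmic factors, at the cost of carrying out the Bickel--Wichura verification in full, including the grid--point reduction you only gestured at. One small slip: your closing remark that finite third moments of $\eta_j$ suffice is inaccurate for the fourth--moment bounds your argument actually uses, though harmless here since $\eta_j$ has moments of all orders.
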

\begin{proof} The proof is based on Theorem 2 of \cite{hahn:1978}.
Let $B$ denote a Brownian  bridge and
$\theta_1=\sup_{0\leq t \leq 1}|B(t)|$. It is clear that
$E\theta_1^m<\infty$ for all $m\geq 1$.
According to \cite{garsia:1970},
 there is a random variable $\theta_2$ such that $E\theta_2^m<\infty$
for all $m\geq 1$ and
$$
|B(t)-B(s)|\leq \theta_2 (|t-s|\log(1/|t-s|))^{1/2},\;\;\;0\leq t,s\leq 1.
$$
Let $V(t)=B^2(t)-t(1-t)$. We note
$$
|V(t)-V(s)|\leq 2\theta_1\theta_2(|t-s|\log(1/|t-s|))^{1/2}+|t-s|.
$$
Thus we get
\beq\label{c-1}
E(V(t)-V(s))^2\leq c_{16}|t-s|\log(1/|t-s|)\;\;\;\mbox{for all}\;\;0\leq t,s\leq 1
\eeq
and
\begin{align}\label{c-2}
E[(V(t)-V(z))^2(V(z)-V(s))^2]
\leq c_{17}(|t-s|\log(1/|t-s|))^2
\end{align}
for all $0\leq s\leq z\leq t\leq 1.$
The estimates in (\ref{c-1}) and (\ref{c-2}) yield that the
conditions of Theorem 2 of \cite{hahn:1978} are satisfied,
 completing the proof Lemma \ref{hahn}.
\end{proof}
\medskip

\begin{proof}[\bf Proof of Theorem \ref{pure}] It follows immediately
from Lemmas \ref{bridge-approx} and \ref{hahn}.
\end{proof}

The transition from Theorem~\ref{pure} to Theorem~\ref{th-1} is based
on the following lemma, in which the norm is the Hilbert--Schmidt norm.

\begin{lemma}\label{dunsch} If Assumptions \ref{as-1}, \ref{as-2}
and  \ref{as-5} hold, then
\beq\label{lamb}
|\lambda_j-\hat{\lambda}_j|
\leq \norm {\mathfrak c}-\hat{\mathfrak c}\norm
\eeq
and
\beq\label{vee}
\norm v_j-\hat{c}_j\hat{v}_j\norm\leq
\frac{2\sqrt{2}}{\zeta_j} \norm {\mathfrak c}-\hat{\mathfrak c}\norm,
\eeq
where $\hat{c}_j= {\rm sign}(\langle \hat{v}_j, v_j \rangle)$
are random signs,
and  $\zeta_1, \zeta_2, \ldots $ are defined in Assumption \ref{d-4}.
\end{lemma}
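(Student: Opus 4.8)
The plan is to read $\mathfrak c$ and $\hat{\mathfrak c}$ as the kernels of the self--adjoint compact integral operators $C$ and $\hat C$ on $L^2[0,1]$, so that the Hilbert--Schmidt norm of $C-\hat C$ coincides with $\norm {\mathfrak c}-\hat{\mathfrak c}\norm$, and $(\lambda_j,v_j)$, $(\hat\lambda_j,\hat v_j)$ are their eigenpairs. Both \eqref{lamb} and \eqref{vee} are then the classical eigenvalue and eigenfunction perturbation bounds, which I would establish pathwise, i.e. for each realization of $\hat{\mathfrak c}$; no probability enters the argument, only the spectral structure.

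For \eqref{lamb} I would invoke Weyl's inequality. The Courant--Fischer min--max characterization of the ordered eigenvalues of a self--adjoint compact operator gives $|\lambda_j-\hat\lambda_j|\le\|C-\hat C\|_{\mathcal L}$, where $\|\cdot\|_{\mathcal L}$ denotes the operator norm. Since the operator norm is dominated by the Hilbert--Schmidt norm, this yields $|\lambda_j-\hat\lambda_j|\le\norm {\mathfrak c}-\hat{\mathfrak c}\norm$.

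For \eqref{vee} I would first reduce the eigenfunction difference to a Fourier tail. Expanding $\hat v_j=\sum_k\la\hat v_j,v_k\ra v_k$ in the orthonormal basis $\{v_k\}$ and using $\hat c_j\la\hat v_j,v_j\ra=|\la\hat v_j,v_j\ra|$, one obtains $\norm v_j-\hat c_j\hat v_j\norm^2=2(1-|\la\hat v_j,v_j\ra|)\le 2\sum_{k\neq j}\la\hat v_j,v_k\ra^2$, the last step following from $1-|\la\hat v_j,v_j\ra|\le 1-\la\hat v_j,v_j\ra^2=\sum_{k\neq j}\la\hat v_j,v_k\ra^2$. I would then control the individual coefficients through the identity $(\hat\lambda_j-\lambda_k)\la\hat v_j,v_k\ra=\la(\hat C-C)\hat v_j,v_k\ra$, obtained by pairing $\hat C\hat v_j=\hat\lambda_j\hat v_j$ with $v_k$ and using the self--adjointness of $C$ together with $Cv_k=\lambda_k v_k$.

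The technical heart, and the step I expect to be the main obstacle, is converting the \emph{random} denominator $\hat\lambda_j-\lambda_k$ into the \emph{deterministic} gap $\zeta_j$. I would split into two cases. On the event $|\hat\lambda_j-\lambda_j|\le\zeta_j/2$, the strict ordering of the $\lambda$'s forces $|\lambda_j-\lambda_k|\ge\zeta_j$ for every $k\neq j$, whence $|\hat\lambda_j-\lambda_k|\ge\zeta_j/2$ by the triangle inequality; Parseval's identity then gives $\sum_{k\neq j}\la\hat v_j,v_k\ra^2\le(2/\zeta_j)^2\norm(\hat C-C)\hat v_j\norm^2\le(2/\zeta_j)^2\norm {\mathfrak c}-\hat{\mathfrak c}\norm^2$, and the reduction above produces exactly the constant $2\sqrt2/\zeta_j$. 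On the complementary event, \eqref{lamb} yields $\norm {\mathfrak c}-\hat{\mathfrak c}\norm\ge|\hat\lambda_j-\lambda_j|>\zeta_j/2$, so the crude bound $\norm v_j-\hat c_j\hat v_j\norm\le\sqrt2$ is already dominated by $(2\sqrt2/\zeta_j)\norm {\mathfrak c}-\hat{\mathfrak c}\norm$. Combining the two cases gives \eqref{vee} on every realization.
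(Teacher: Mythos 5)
Your proof is correct, but it takes a genuinely different route from the paper's: the paper offers no argument at all for this lemma, deferring entirely to Section VI.1 of \cite{gohberg:1990} or \cite{dunford:schwartz:1988} for \eqref{lamb} and to Lemmas 2.2 and 2.3 of \cite{HKbook} for \eqref{vee}. What you have written is, in effect, a self-contained reconstruction of those cited perturbation bounds: Courant--Fischer/Weyl plus the domination of the operator norm by the Hilbert--Schmidt norm for \eqref{lamb}; and for \eqref{vee} the reduction $\norm v_j-\hat{c}_j\hat{v}_j\norm^2=2(1-|\la \hat{v}_j,v_j\ra|)$, the coefficient identity $(\hat{\lambda}_j-\lambda_k)\la \hat{v}_j,v_k\ra=\la(\hat{C}-C)\hat{v}_j,v_k\ra$, and the case split on $|\hat{\lambda}_j-\lambda_j|\lessgtr \zeta_j/2$ with the trivial bound $\sqrt2$ in the bad case. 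This is essentially the classical argument underlying the cited Lemma 2.3. What the paper's citation buys is brevity; what your version buys is self-containedness, a pathwise (realization-by-realization) statement making clear that no probability is involved, and an explicit derivation of the constant $2\sqrt2$, including the observation that on the bad event the claimed bound holds vacuously because its right-hand side already exceeds $\sqrt2$.

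Two small repairs are needed. First, your step $1-\la\hat{v}_j,v_j\ra^2=\sum_{k\neq j}\la\hat{v}_j,v_k\ra^2$ is Parseval for the system $\{v_k\}$ and so requires its completeness; Assumption \ref{as-5} makes the eigenvalues positive and distinct but does not force $\ker C=\{0\}$, so in general $\hat{v}_j$ has a component $w\in\ker C$ and the identity becomes $1-\la\hat{v}_j,v_j\ra^2=\sum_{k\neq j}\la\hat{v}_j,v_k\ra^2+\norm w\norm^2$. The extra term yields to your own device: pairing $\hat{C}\hat{v}_j=\hat{\lambda}_j\hat{v}_j$ with $w$ and using $Cw=0$ gives $\la(\hat{C}-C)\hat{v}_j,w\ra=\hat{\lambda}_j\norm w\norm^2$, and on your good event $\hat{\lambda}_j\geq \lambda_j-\zeta_j/2\geq \zeta_j/2$ (since $\lambda_j>\lambda_j-\lambda_{j+1}\geq\zeta_j$), so $\norm w\norm^2$ obeys the same $(2/\zeta_j)^2\norm(\hat{C}-C)\hat{v}_j\norm^2$ bound as the other coefficients --- in effect one treats $w$ as a coefficient attached to the eigenvalue $0$, whose distance from $\hat{\lambda}_j$ is also at least $\zeta_j/2$. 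Second, the paper's definition $\zeta_1=\lambda_2-\lambda_1$ in Assumption \ref{d-4} is negative, evidently a typo for $\lambda_1-\lambda_2$; your argument tacitly uses the corrected positive gaps, i.e.\ $|\lambda_j-\lambda_k|\geq\zeta_j$ for all $k\neq j$, which is the only reading under which \eqref{vee} is non-vacuous. With these two emendations your proof is complete and matches the cited results, constant included.
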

\begin{proof} Inequality (\ref{lamb}) can be deduced
from the general results presented  in Section VI.1 of
\cite{gohberg:1990}
or in \cite{dunford:schwartz:1988}. These results are presented
in a convenient form in Lemma 2.2 in \cite{HKbook}.
Finally Lemma 2.3 in \cite{HKbook} gives (\ref{vee}).
\end{proof}

\begin{proof}[\bf Proof of Theorem \ref{th-1}]
Introducing
$$
U_N(x)=U_N(x,t)=\frac{1}{N^{1/2}}
\biggl\{\sum_{i=1}^{\lfloor Nx\rfloor}Z_i(t)-x\sum_{i=1}^N Z_i(t)\biggl\}
$$
we can write
$$
\hat{Z}_N(u,x)=\frac{1}{d^{1/2}}\sum_{j=1}^{\lfloor du\rfloor}
\biggl\{\frac{1}{\hat{\lambda}_j}\la U_N(x),\hat{v}_j\ra^2-x(1-x)\biggl\}.
$$
Elementary arguments give
\begin{align*}
\sum_{j=1}^{\lfloor du\rfloor}\frac{1}{\hat{\lambda}_j}\la U_N(x),\hat{v}_j\ra^2
=\sum_{j=1}^{\lfloor du\rfloor}\frac{1}{{\lambda}_j}&\la U_N(x),\hat{c}_j{v}_j\ra^2
+\sum_{j=1}^{\lfloor du\rfloor}\biggl\{\frac{1}{\hat{\lambda}_j}-\frac{1}{{\lambda}_j}\biggl\}\la U_N(x),\hat{v}_j\ra^2\\
&+\sum_{j=1}^{\lfloor du\rfloor}\frac{1}{{\lambda}_j}(\la U_N(x),\hat{v}_j\ra^2-\la U_N(x), \hat{c}_j{v}_j\ra^2).
\end{align*}
By the Cauchy--Schwarz inequality we have
\beq\label{first}
\frac{1}{d^{1/2}}\sum_{j=1}^{ d}\biggl|\frac{1}{\hat{\lambda}_j}-\frac{1}{{\lambda}_j}\biggl|\la U_N(x),\hat{v}_j\ra^2\leq \norm U_N(x)\norm^2\frac{1}{d^{1/2}}\sum_{j=1}^{ d}\frac{|\lambda_j-\hat{\lambda}_j|}{\hat{\lambda_j}\lambda_j}
\eeq
and since $|a^2-b^2|=|a+b||a-b|$,
\beq\label{second}
\frac{1}{d^{1/2}}\sum_{j=1}^{ d}\frac{1}{{\lambda}_j}(\la U_N(x),\hat{v}_j\ra^2-\la U_N(x)-\hat{c}_j{v}_j\ra^2)
\leq \norm U_N(x)\norm^2\frac{2}{d^{1/2}}\sum_{j=1}^{ d}\frac{1}{\lambda_j}\norm \hat{v}_j-\hat{c}_j{v}_j\norm^2.
\eeq
It follows from the results of \citet{kuelbs:1973}
(for a shorter proof we refer to Theorem 6.3 in
\citet{HKbook})
that
\[
\sup_{0\leq x \leq 1}\norm U_N(x)\norm^2=O_P(1).
\]
Due to Assumption \ref{as-6} we can use a
Marcinkiewicz--Zygmund type law of
large numbers for sums of independent and identically
distributed random functions in
Banach spaces
(cf., e.g., \citet{woyczynski:1978} or \citet{howell:taylor:1980})
to conclude
\[
\norm {\mathfrak c} - \hat{\mathfrak c} \norm=O_P(N^{-1/3}).
\]
Assumption \ref{d-3} gives that $N^{-1/120}/\lambda_d\to 0$
and therefore  by Lemma \ref{dunsch}
$$
\max_{1\leq i \leq d}\frac{\lambda_i}{\hat{\lambda}_i}=O_P(1).
$$
So by Lemma \ref{dunsch} and (\ref{first}) we have
\begin{align*}
\frac{1}{d^{1/2}}\sum_{j=1}^{ d}\biggl|\frac{1}{\hat{\lambda}_j}-\frac{1}{{\lambda}_j}\biggl|\la U_N(x),\hat{v}_j\ra^2&=O_P(1)\frac{1}{d^{1/2}N^{1/3}}\sum_{i=1}^d1/\lambda_i^2
\\
&=O_P(1)\frac{d^{1/2}}{N^{1/3}}\frac{1}{\lambda_d^2}
\\
&=O_P(1)\frac{N^{1/80}}{N^{1/3}}N^{1/60}
\\
&=o_P(1)
\end{align*}
on account of Assumptions \ref{d-1} and \ref{d-3}.
Similarly, (\ref{second}) and Assumption \ref{d-4} yield
\begin{align} \label{final}
\frac{1}{d^{1/2}}\sum_{j=1}^{ d}\frac{1}{{\lambda}_j}
\la U_N(x),\hat{v}_j-\hat{c}_j{v}_j\ra^2=O_P(1)\frac{1}{d^{1/2}N^{1/3}}\sum_{j=1}^d\frac{1}{\lambda_j\zeta_j}
=o_P(1).
\end{align}
Theorem \ref{th-1} now follows from Theorem \ref{pure}.
\end{proof}

\begin{proof}[\bf  Proof of Corollary \ref{col}]
By Lemma~\ref{bridge-approx} and \eqref{final},
relation \eqref{col-1} is proven if we show that
\beq\label{cp-1}
\frac{1}{d^{1/2}\sigma_0}\left\{\sum_{i=1}^d
\sup_{0\leq x \leq 1}B^2_i(x)-d\kappa_0\right\}\;\;
\stackrel{{\mathcal D}}{\to}\;\;N(0,1),
\eeq
where $B_1, B_2, \ldots, B_d$ are independent Brownian bridges.
Clearly, (\ref{cp-1}) is an immediate consequence of the central
limit theorem.
Similarly, to establish \eqref{col-2}, we need to show only that
$$
\frac{1}{(d/45)^{1/2}}\left\{\sum_{i=1}^d\int B^2_i(x)dx
-\frac{d}{6}\right\}\;\;\stackrel{{\mathcal D}}{\to}\;\;N(0,1).
$$
The above result is known, see  Remark 2.1 in
\citet{aue:hormann:horvath:reimherr:2009}.
The same argument can be used to prove \eqref{col-3}.
\end{proof}

\section{\bf Proofs of the results of
Section~\ref{s:two-s}} \label{s:p-two}
We note that under the null hypothesis
$\bar{X}_N-\bar{Y}_M=\bar{Z}_N-\bar{Q}_M.$
Define
$$
F_{N,M}=\sum_{j=1}^NZ_j-\frac{N}{M}\sum_{j=1}^MQ_j.
$$
The proof of Theorem \ref{th-two-1} is based on Lemma \ref{sena}, we need to write $F_{N,M}$ as a single sum of independent identically distributed random processes and an additional small remainder term. Let $K$ be an integer and define the integers $R=\lfloor N/K\rfloor$ and $L=\lfloor M/K\rfloor$. Next we define
$$
A_{i}=\sum_{\ell=R(i-1)+1}^{iR} Z_\ell-\sum_{\ell=L(i-1)+1}^{iL} \frac{N}{M}Q_\ell,\;\;\;i=1,2,\ldots ,K.
$$
Clearly,
$$
F_{N,M}=\sum_{i=1}^KA_i+\tilde{A},
$$
where
$$
\tilde{A}=\sum_{\ell=KR+1}^NZ_\ell-\frac{N}{M}\sum_{\ell=KL+1}^M Q_\ell.
$$

We will show first if $v$ is a function with $\norm v\norm =1$, then for every $n$
\beq\label{e-1}
E\left|\sum_{\ell=1}^n\la Z_\ell, v\ra\right|^3\leq c_1n^{3/2}
\eeq
and
\beq\label{e-2}
E\left|\sum_{\ell=1}^n\la Q_\ell, v\ra\right|^3\leq c_2n^{3/2},
\eeq
where $c_1$ and $c_2$ only depends on $E\norm Z_1\norm^3$  and $E\norm Q_1\norm^3$, respectively.
 Using Rosenthal's inequality (cf.\ \citet{petrov:1995}, p.\ 59) we get
$$
E\biggl|\sum_{\ell=1}^n\la Z_\ell, v\ra\biggl|^3\leq c_3\left\{ nE| \la Z_1, v\ra |^3+( nE\la Z_1, v\ra^2 )^{3/2} \right\},
$$
where $c_3$ is an absolute constant. It is easy to see that
$$
| \la Z_1, v\ra |\leq \norm Z_1\norm,
$$
which implies (\ref{e-1}). The same argument can be used to prove (\ref{e-2}).
\\
Next  we define the function
\[
{\mathfrak c}_{N,M}(t,s)={\mathfrak c}(t,s)
+\frac{N^2L}{M^2R}{\mathfrak c}_*(t,s).
\]
It is clear that ${\mathfrak c}_{N,M}$
 is a covariance function and therefore we
can find $\bar{\kappa}_1=\bar{\kappa}_{1}(N,M)\geq
\bar{\kappa}_2=\bar{\kappa}_{2}(N,M)\geq \ldots $ and orthonormal
functions $\bar{u}_1(t)=\bar{u}_{1}(N,M),
\bar{u}_2(t)=\bar{u}_{2}(N,M),\ldots$ satisfying
\[
\bar{\kappa}_i \bar{u}_i(t)
=\int {\mathfrak c}_{N,M}(t,s)\bar{u}_i(s)ds, \;\;\;1\leq i <\infty.
\]

Now we define the vector
$$
\bpsi_i=(\la A_i, \bar{u}_1\ra/(R\bar{\kappa}_1)^{1/2}, \la A_i, \bar{u}_2\ra/(R\bar{\kappa}_2)^{1/2},\dots ,\la A_i, \bar{u}_d\ra/(R\bar{\kappa}_d)^{1/2})^T, \;\;\;1\leq i \leq K.
$$
It is easy to see that $\bpsi_i,\,1\leq i \leq K$, are independent
and identically distributed random vectors with mean ${\bf 0}$
and $E\bpsi_1\bpsi_1^T={\bf I}_d$,
where ${\bf I}_d$ is the $d\times d$ identity matrix.
Also, (\ref{e-1}) and (\ref{e-2}) imply that
$$
E|\bpsi_1|\leq c_4\left(\sum_{\ell=1}^d1/\bar{\kappa}_\ell\right)^{3/2},
$$
where $c_4$ only depends on $E\norm Z_1\norm^3$ and $E\norm Q_1\norm^3.$ Using Lemma \ref{sena} we obtain similarly to (\ref{eq-2.7}) that there are  independent standard normal random vectors  $\bgamma_i=\bgamma_i(N,M), 1\leq i \leq K,$ in $R^d$ such that
\begin{align}\label{two-sena}
P\biggl\{\biggl|\sum_{i=1}^K\bpsi_i-\sum_{i=1}^K\bgamma_i\biggl|\geq c_5K^{3/8}d^{1/4}\biggl(\sum_{\ell=1}^d1/\bar{\kappa}_\ell &\biggl)^{3/8}\biggl\}\\
&\leq c_5K^{-1/8}d^{1/4}\left(\sum_{\ell=1}^d1/\bar{\kappa}_\ell\right)^{3/8},\notag
\end{align}
where $c_5$ does not depend on $d$. Let
$$
\tilde{\bpsi}=(\la \tilde{A}, \bar{u}_1\ra/\sqrt{\bar{\kappa}_1}, \la \tilde{A}, \bar{u}_2\ra/\sqrt{\bar{\kappa}_2},\ldots , \la\tilde{A}, \bar{u}_d\ra/\sqrt{\bar{\kappa}_d})^T.
$$
It follows from (\ref{e-1}) and (\ref{e-2}) that with some constant $c_6$,
not depending on $d$ we have
$$
E|\tilde{\bpsi}|^3\leq c_6K^{3/2}\left(\sum_{\ell=1}^d1/\bar{\kappa}_\ell\right)^{3/2}
$$
and therefore by Markov's inequality for every $x>0$
\beq\label{two-eq-2}
P\biggl\{N^{-1/2}|\tilde{\bpsi}|>x\biggl\}\leq c_7\frac{K^{3/2}}{x^3 N^{3/2}}\left(\sum_{\ell=1}^d1/\bar{\kappa}_\ell\right)^{3/2}.
\eeq
Let
$$
\bkappa_{N,M}=(\la F_{N,M}, \bar{u}_1\ra /\sqrt{\bar{\kappa}_1}, \la F_{N,M}, \bar{u}_2\ra /\sqrt{\bar{\kappa}_2},\ldots ,\la F_{N,M}, \bar{u}_d\ra /\sqrt{\bar{\kappa}_d})^T.
$$
Next we choose $K=\lfloor N^{3/4} \rfloor$ in (\ref{two-sena}), (\ref{two-eq-2}) and $x=K^{-1/8}(\sum_{\ell=1}^d1/\bar{\kappa}_\ell)^{3/8}$ in (\ref{two-eq-2}) to conclude that there is $\bgamma_{N,M}$, a standard normal random vector in $R^d$ such that
\begin{align}\label{two-eq-3}
P\Biggl\{ \left|\frac{1}{\sqrt{N^*}}\bkappa_{N,M}-\bgamma_{N,M}  \right| &\geq c_8N^{-3/32}d^{1/4}\left(\sum_{\ell=1}^d1/\bar{\kappa}_\ell\right)^{3/8} \Biggl\}\\
&\leq
c_8N^{-3/32}d^{1/4}\left(\sum_{\ell=1}^d1/\bar{\kappa}_\ell\right)^{3/8},\notag
\end{align}
where $N^*=\lfloor N/\lfloor N^{3/4}\rfloor  \rfloor\lfloor N^{3/4}\rfloor$.
Using the definitions of ${\mathfrak c}_P$ and
 ${\mathfrak c}_{N,M}$, together with Assumption \ref{m-1}, we conclude
\beq\label{two-eq-4}
\norm {\mathfrak c}_P - {\mathfrak c}_{N,M}\norm=O(N^{-1/4}),
\eeq
so by Lemma 2.3 of \citet{HKbook}, cf. Lemma \ref{dunsch},  we have
\beq\label{two-eq-5}
|\kappa_i-\bar{\kappa}_i|\leq c_9 \,\norm {\mathfrak c}_P - {\mathfrak c}_{N,M}\norm=O(N^{-1/4}).
\eeq
Using Assumption \ref{m-3} we conclude that
$$
\sum_{\ell=1}^d1/\bar{\kappa_\ell}=O\left(\sum_{\ell=1}^d1/{\kappa_\ell}\right).
$$
Hence it follows from (\ref{two-eq-3}) and Assumption \ref{m-3} that
$$
\frac{1}{N}|\bkappa_{N,M}|^2-\frac{N^*}{N}|\bgamma_{N,M}|^2=o_P(d^{1/2}).
$$
Since $|\bgamma_{N,M}|^2$ is a $\chi^2$ random variable with  $d$
degrees of freedom, Assumption \ref{m-3} yields that
$$
\left|\frac{N^*}{N}-1\right||\bgamma_{N,M}|^2=o_P(d^{1/2}).
$$
It is well known that   $(|\bgamma_{N,M}|^2-d)/(2d)^{1/2}$ converges in distribution to a standard normal random variable, and therefore
\[
\frac{1}{\sqrt{2d}}\left\{\frac{1}{N}|\bkappa_{N,M}|^2-d\right\}\;\stackrel{{\mathcal D}}{\to}\;\;N(0,1),
\]
where $N(0,1)$ stands for a standard normal random variable.\\ The
difference between $|\bkappa_{N,M}|^2/N$ and $\widehat{D}_{N,M}$ is that
the projections are done into the direction of different functions
($\bar{u}_i$'s and $\hat{u}_i$'s, respectively) and the normalizations
($\bar{\kappa}_i$'s and $\hat{\kappa}_i$'s, respectively) are also
different. However, using the Marcinkiewicz--Zygmund law of large numbers in
a Banach space together with \eqref{two-eq-4} and Assumption \ref{m-3}, we obtain that
\[
\norm \hat{\mathfrak c}_P - {\mathfrak c}_{N,M} \norm=O_P(N^{-1/4}).
\]
Hence, in view of \eqref{two-eq-5}, also
$$
\sup_{i}|\hat{\kappa}_i-\bar{\kappa}_i|=O_P(N^{-1/4}),
$$
and there are random signs $\hat{d}_i$ such that
$$
\sup_i\left(\sum_{\ell =1}^i1/\iota_\ell\right)^{-1}\norm \hat{u}_i-\hat{d}_i\bar{u}_i \norm=O_P(N^{-1/4}).
$$
So repeating the arguments used in the proof of Theorem \ref{th-1},
we get
$$
\left|\widehat{D}_{N,M}-\frac{1}{N}|\bkappa_{N,M}|^2\right|=o_P(d^{1/2}),
$$
completing the proof.

\bibliographystyle{plainnat}

\renewcommand{\baselinestretch}{1.0}
\small


\begin{thebibliography}{31}
\providecommand{\natexlab}[1]{#1}
\providecommand{\url}[1]{\texttt{#1}}
\expandafter\ifx\csname urlstyle\endcsname\relax
  \providecommand{\doi}[1]{doi: #1}\else
  \providecommand{\doi}{doi: \begingroup \urlstyle{rm}\Url}\fi

\bibitem[Aston and Kirch(2012)]{aston:kirch:2012}
J.~A.~D. Aston and C.~Kirch.
\newblock Estimation of the distribution of change--points with application to
  {fMRI} data.
\newblock \emph{The Annals of Applied Statistics}, 
  2012.
\newblock Forthcoming.

\bibitem[Aue et~al.(2009)Aue, H{\"o}rmann, Horv{\'a}th, and
  Reimherr]{aue:hormann:horvath:reimherr:2009}
A.~Aue, S.~H{\"o}rmann, L.~Horv{\'a}th, and M.~Reimherr.
\newblock Break detection in the covariance structure of multivariate time
  series models.
\newblock \emph{The Annals of Statistics}, 37:\penalty0 4046--4087, 2009.

\bibitem[Basseville et~al.(2012)Basseville, Nikifirov, and
  Tartakovsky]{basseville:nikifirov:tartakovsky:2012}
M.~Basseville, I.~V. Nikifirov, and A.~Tartakovsky.
\newblock \emph{Sequential {A}nalysis: {H}ypothesis {T}esting and
  {C}hange--{P}oint Detection}.
\newblock Chapman \& Hall/CRC, 2012.

\bibitem[Bathia et~al.(2010)Bathia, Yao, and
  Ziegelmann]{bathia:yao:ziegelmann:2010}
N.~Bathia, Q.~Yao, and F.~Ziegelmann.
\newblock Identifying the finite dimensionality of curve time series.
\newblock \emph{The Annals of Statistics}, 38:\penalty0 3353--3386, 2010.

\bibitem[Benko et~al.(2009)Benko, H{\"a}rdle, and
  Kneip]{benko:hardle:kneip:2009}
M.~Benko, W.~H{\"a}rdle, and A.~Kneip.
\newblock Common functional principal components.
\newblock \emph{The Annals of Statistics}, 37:\penalty0 1--34, 2009.

\bibitem[Berkes et~al.(2009)Berkes, Gabrys, Horv{\'a}th, and
  Kokoszka]{berkes:gabrys:horvath:kokoszka:2009}
I.~Berkes, R.~Gabrys, L.~Horv{\'a}th, and P.~Kokoszka.
\newblock Detecting changes in the mean of functional observations.
\newblock \emph{Journal of the Royal Statistical Society (B)}, 71:\penalty0
  927--946, 2009.

\bibitem[Cardot et~al.(2003)Cardot, Ferraty, Mas, and Sarda]{cardot:fms:2003}
H.~Cardot, F.~Ferraty, A.~Mas, and P.~Sarda.
\newblock Testing hypothesis in the functional linear model.
\newblock \emph{Scandinavian Journal of Statistics}, 30:\penalty0 241--255,
  2003.

\bibitem[Chen and Gupta(2011)]{chen:gupta:2011}
J.~Chen and A.~K. Gupta.
\newblock \emph{Parametric {S}tatistical {C}hange {P}oint {A}nalysis: {W}ith
  {A}pplications to {G}enetics, {M}edicine, and {F}inance}.
\newblock Birkh{\"a}user, 2011.

\bibitem[Cs{\"o}rg{\H o} and Horv{\'a}th(1997)]{csorgo:horvath:1997}
M.~Cs{\"o}rg{\H o} and L.~Horv{\'a}th.
\newblock \emph{Limit {T}heorems in {C}hange-{P}oint {A}nalysis}.
\newblock Wiley, New York, 1997.

\bibitem[Cs{\"o}rg{\H o} and R{\'e}v{\'e}sz(1981)]{csorgo:revesz:1981}
M.~Cs{\"o}rg{\H o} and P.~R{\'e}v{\'e}sz.
\newblock \emph{Strong {A}pproximations in {P}robability and {S}tatistics}.
\newblock Academic Press, New York, 1981.

\bibitem[Dunford and Schwartz(1988)]{dunford:schwartz:1988}
N.~Dunford and J.~T. Schwartz.
\newblock \emph{Linear {O}perators, {P}arts {I} and {II}.}
\newblock Wiley, 1988.

\bibitem[Einmahl(1987)]{einmahl:1987}
U.~Einmahl.
\newblock Strong invariance principles for partial sums of independent random
  vectors.
\newblock \emph{The Annals of Probability}, 15:\penalty0 1419--1440, 1987.

\bibitem[Einmahl(1989)]{einmahl:1989}
U.~Einmahl.
\newblock Extension of results of {K}oml{\'o}s, {M}ajor and {T}usnady to the
  multivariate case.
\newblock \emph{Journal of Multivariate Analysis}, 28:\penalty0 20--68, 1989.

\bibitem[Gallagher et~al.(2012)Gallagher, Lund, and
  Robbins]{gallagher:lund:robbins:2012}
C.~Gallagher, R.~Lund, and M.~Robbins.
\newblock Changepoint detection in daily precipitation data.
\newblock \emph{Environmetrics}, 23: 407--419,  2012.


\bibitem[Garsia(1970)]{garsia:1970}
A.~M. Garsia.
\newblock Continuity properties of {G}aussian processes with multidimensional
  time parameter.
\newblock In \emph{Proceedings of the $6^{\mbox{{\it th}}}$ {B}erkeley {S}ymp.\
  {M}ath.\ {S}tat.\ {P}robab.}, volume~2, pages 369--374. University of
  California Press, 1970.

\bibitem[Gohberg et~al.(1990)Gohberg, Golberg, and Kaashoek]{gohberg:1990}
I.~Gohberg, S.~Golberg, and M.~A. Kaashoek.
\newblock \emph{Classes of {L}inear {O}perators}, volume~49 of \emph{{O}perator
  {T}heory: {A}dvances and {A}pplications}.
\newblock Birkha{\"u}ser, 1990.

\bibitem[Gromenko et~al.(2012)Gromenko, Kokoszka, Zhu, and
  Sojka]{gromenko:kokoszka:2012big}
O.~Gromenko, P.~Kokoszka, L.~Zhu, and J.~Sojka.
\newblock Estimation and testing for spatially indexed curves with application
  to ionospheric and magnetic field trends.
\newblock \emph{The Annals of Applied Statistics}, 6:\penalty0 669--696, 2012.

\bibitem[Hahn(1978)]{hahn:1978}
M.~G. Hahn.
\newblock Central limit theorems in ${D}[0,1]$.
\newblock \emph{Zeitschrift f\"ur Wahrscheinlichkeitstheorie und verwandte
  Gebiete}, 44:\penalty0 89--101, 1978.

\bibitem[Hall and Vial(2006)]{hall:vial:2006}
P.~Hall and C.~Vial.
\newblock Assessing the finite dimensionality of functional data.
\newblock \emph{Journal of the Royal Statistical Society (B)}, 68:\penalty0
  689--705, 2006.

\bibitem[Horv{\'a}th and Kokoszka(2012)]{HKbook}
L.~Horv{\'a}th and P.~Kokoszka.
\newblock \emph{Inference for {F}unctional {D}ata with {A}pplications}.
\newblock Springer, 2012.

\bibitem[Horv{\'a}th et~al.(2009)Horv{\'a}th, Kokoszka, and
  Reimherr]{horvath:kokoszka:reimherr:2009}
L.~Horv{\'a}th, P.~Kokoszka, and M.~Reimherr.
\newblock Two sample inference in functional linear models.
\newblock \emph{Canadian Journal of Statistics}, 37:\penalty0 571--591, 2009.

\bibitem[Horv{\'a}th et~al.(2012)Horv{\'a}th, Kokoszka, and
  Reeder]{horvath:kokoszka:reeder:2012}
L.~Horv{\'a}th, P.~Kokoszka, and R.~Reeder.
\newblock Estimation of the mean of functional time series and a two sample
  problem.
\newblock \emph{Journal of the Royal Statistical Society (B)}, 75 : 103--122, 
2013.


\bibitem[Howell and Taylor(1980)]{howell:taylor:1980}
J.~O. Howell and R.~L. Taylor.
\newblock Marcinkiewicz--{Z}ygmund weak laws of large numbers for unconditional
  random elements in banach spaces.
\newblock In J.~Kuelbs, editor, \emph{Probability in {B}anach {S}paces. III.
  Proceedings of the Third International Conference held at {T}ufts
  {U}niversity, {M}edford, {M}ass.}, pages 219--230. Springer, 1980.

\bibitem[Kuelbs(1973)]{kuelbs:1973}
J.~Kuelbs.
\newblock The invariance principle for {B}anach space valued random variables.
\newblock \emph{Journal of Multivariate Analysis}, 3:\penalty0 161--172, 1973.

\bibitem[Panaretos et~al.(2010)Panaretos, Kraus, and Maddocks]{panaretos:2010}
V.~M. Panaretos, D.~Kraus, and J.~H. Maddocks.
\newblock Second-order comparison of {G}aussian random functions and the
  geometry of {DNA} minicircles.
\newblock \emph{Journal of the American Statistical Association}, 105:\penalty0
  670--682, 2010.

\bibitem[Petrov(1995)]{petrov:1995}
V.~V. Petrov.
\newblock \emph{Limit {T}heorems of {P}robability {T}heory: {S}equences of
  {I}ndependent {R}andom {V}arianles}.
\newblock Clarendon Press, 1995.

\bibitem[Ramsay et~al.(2009)Ramsay, Hooker, and
  Graves]{ramsay:hooker:graves:2009}
J.~Ramsay, G.~Hooker, and S.~Graves.
\newblock \emph{Functional {D}ata {A}nalysis with {R} and {MATLAB}}.
\newblock Springer, 2009.

\bibitem[Ramsay and Silverman(2005)]{ramsay:silverman:2005}
J.~O. Ramsay and B.~W. Silverman.
\newblock \emph{Functional {D}ata {A}nalysis}.
\newblock Springer, 2005.

\bibitem[Senatov(1998)]{senatov:1998}
V.~V. Senatov.
\newblock \emph{Normal {A}pproximation: {N}ew {R}esults, {M}ethods and
  {P}roblems}.
\newblock VSP, 1998.

\bibitem[Strassen(1965)]{strassen:1965}
V.~Strassen.
\newblock The existence of probability measures with given marginals.
\newblock \emph{The Annals of Mathematical Statistics}, 36:\penalty0 423--439,
  1965.

\bibitem[Woyczynski(1978)]{woyczynski:1978}
W.~Woyczynski.
\newblock Geometry and martingales in {B}anach spaces. {II}. independent
  increments.
\newblock In J.~Kuelbs, editor, \emph{Probability on {B}anach {S}paces}, pages
  267--517. Marcel Dekker, 1978.

\end{thebibliography}

\end{document}